\documentclass[secthm,seceqn,amsthm,ussrhead,12pt]{amsart}
\usepackage[utf8]{inputenc}
\usepackage[english]{babel}
\usepackage{amssymb,amsmath,amsthm,amsfonts,xcolor,enumerate,hyperref,comment,longtable,cleveref}

\usepackage{times}
\usepackage{cite}
\usepackage{pdflscape}
\usepackage{ulem}
\usepackage[mathcal]{euscript}
\usepackage{tikz}
\usepackage{hyperref}
\usepackage{cancel}
\usepackage{stmaryrd}

\usetikzlibrary{arrows}

\setlength{\topmargin}{-15mm}
\setlength{\textwidth}{235mm}  
\setlength{\textheight}{250mm}
\setlength{\evensidemargin}{20mm}
\setlength{\oddsidemargin}{20mm}

\mathsurround=0pt  \tolerance=1500 \textwidth=18true cm 
\hoffset=-25mm 

\sloppy
\newtheorem{theorem}{Theorem}
\newtheorem{lemma}[theorem]{Lemma}

\newtheorem{remark}[theorem]{Remark}

\newtheorem{definition}[theorem]{Definition}

\newenvironment{Proof}[1][Proof.]{\begin{trivlist}
\item[\hskip \labelsep {\bfseries #1}]}{\flushright
$\Box$\end{trivlist}}

\usepackage{stmaryrd}
\usepackage{xcolor}

\begin{document}
\noindent{\Large
The algebraic and geometric classification of nilpotent Novikov algebras}
\footnote{
This work was supported by Agencia Estatal de Investigación (Spain), grant MTM2016-79661-P (European FEDER support included, UE); 
IMU/CDC Grant (Abel Visiting Scholar Program);  RFBR 18-31-20004; FAPESP  18/15712-0.
The authors  would like to acknowledge the hospitality of the University of Santiago de Compostela (Spain).}

   \

   {\bf  Iqboljon Karimjanov$^{a}$,
   Ivan   Kaygorodov$^{b}$ \&
   Abror Khudoyberdiyev$^{c}$}

\

 \

{\tiny

$^{a}$ University of Santiago de Compostela, Santiago de Compostela, Spain.

$^{b}$ CMCC, Universidade Federal do ABC, Santo Andr\'e, Brazil.

$^{c}$ National University of Uzbekistan, Institute of Mathematics Academy of
Sciences of Uzbekistan, Tashkent, Uzbekistan.

\smallskip

   E-mail addresses:

\smallskip

Iqboljon Karimjanov (iqboli@gmail.com)

Ivan   Kaygorodov (kaygorodov.ivan@gmail.com)

Abror Khudoyberdiyev (khabror@mail.ru)

}

\

\

\noindent{\bf Abstract}:
{\it This paper is devoted to give the complete algebraic and geometric classification of $4$-dimensional nilpotent Novikov
 algebras over $\mathbb C.$}

\

\noindent {\bf Keywords}:
{\it Novikov algebras, nilpotent algebras, algebraic classification, central extension, geometric classification, degeneration.}

\

\noindent {\bf MSC2010}: 17D25, 17A30.

\section*{Introduction}

The algebraic classification (up to isomorphism) of algebras of dimension $n$ from a certain variety
defined by some family of polinomial identities is a classic problem in the theory of non-associative algebras.
There are many results related to the algebraic classification of algebras with small dimensional in the varieties of
Jordan, Lie, Leibniz, Zinbiel and many other algebras \cite{ack, cfk19, ck13, gkks, degr3, usefi1, degr1, degr2, ha16, hac18, kv16, krh14}.
Another interesting direction in the classification of algebras is geometric classification.
There are many results related to geometric classification of
Jordan, Lie, Leibniz, Zinbiel and many other algebras
\cite{casas, maria,bb14, BC99, cfk19, GRH, GRH2,gkks, ikv17, ikv18, kppv, kpv, kv16, S90}.
In the present paper, we give the algebraic  and geometric classification of
$4$-dimensional nilpotent Novikov  algebras
introduced by Novikov and Balinsky in \cite{bn85}.

The variety of Novikov algebras is defined by the following identities:
\[
\begin{array}{rcl}
(xy)z &=& (xz)y,\\
(xy)z-x(yz) &=& (yx)z-y(xz).
\end{array} \]
It contains the commutative associative algebras as a subvariety.
On the other hand, the variety of Novikov algebras is the intersection of
the variety of right commutative algebras (defined by the first Novikov identity)
and
the variety of left symmetric (Pre-Lie) algebras
(defined by the second Novikov identity).
Also, any Novikov algebra with the commutator multiplication gives a Lie algebra,
and Novikov algebras are related to
Tortken and
Novikov-Poisson algebras  \cite{dz02, xu97}.
The sistematic study of Novikov algebras started after the paper of Zelmanov where
all finite-dimensional simple Novikov algebras over the complex field were found \cite{ze87}.
The first nontrivial examples of  infinite-dimensional simple  Novikov algebras were constructed in \cite{fi89}.
Also, the simple Novikov algebras were described in infinite-dimensional case and over fields of positive characteristic \cite{os94, xu96, xu01}.
The algebraic classification of $3$-dimensional Novikov algebras was given in \cite{bc01},
and for some classes of $4$-dimensional algebras, it was given in \cite{bg13};
the geometric classification of $3$-dimensional Novikov algebras was given in \cite{bb14}.
Many other pure algebraic properties were studied in a series of papers of Dzhumadildaev \cite{di14, dt05, dz11, dl02}.

Our method for classifying nilpotent Novikov algebras is based on the calculation of central extensions of smaller nilpotent algebras from the same variety.
Central extensions play an important role in quantum mechanics: one of the earlier
encounters is through Wigner\'{}s theorem, which states that a symmetry of a quantum
mechanical system determines an (anti-)unitary transformation of a Hilbert space.
Another area of physics where one encounters central extensions is the quantum theory
of conserved currents of a Lagrangian. These currents span an algebra which is closely
related to the so-called affine Kac--Moody algebras, the universal central extensions
of loop algebras.
Central extensions are needed in physics because the symmetry group of a quantized
system is usually a central extension of the classical symmetry group, and in the same way
the corresponding symmetry Lie algebra of the quantum system is, in general, a central
extension of the classical symmetry algebra. Kac--Moody algebras have been conjectured
to be the symmetry groups of a unified superstring theory. The centrally extended Lie
algebras play a dominant role in quantum field theory, particularly in conformal field
theory, string theory and in $M$-theory.
In the theory of Lie groups, Lie algebras and their representations, a Lie algebra extension
is an enlargement of a given Lie algebra $g$ by another Lie algebra $h.$ Extensions
arise in several ways. For example, the trivial extension is obtained as a direct sum of
two Lie algebras. Other types are split and central extensions. Extensions
arise naturally, for instance, when constructing a Lie algebra from projective group representations.
A central extension and an extension by a derivation of a polynomial loop algebra
over a finite-dimensional simple Lie algebra give a Lie algebra isomorphic to a
non-twisted affine Kac--Moody algebra \cite[Chapter 19]{bkk}. 
The algebraic study of central extensions of Lie and non-Lie algebras has been an important topic for years \cite{omirov,ha17,hac16,kkl18,ss78,zusmanovich}.
First, Skjelbred and Sund used central extensions of Lie algebras for a classification of nilpotent Lie algebras  \cite{ss78}.
After that, using the method described by Skjelbred and Sund,  all non-Lie central extensions of  all $4$-dimensional Malcev algebras were described \cite{hac16}, and also
all the non-associative central extensions of $3$-dimensional Jordan algebras \cite{ha17},
all the anticommutative central extensions of the $3$-dimensional anticommutative algebras \cite{cfk182},
and all the central extensions of the $2$-dimensional algebras \cite{cfk18}.
Note that the method of central extensions is an important tool in the classification of nilpotent algebras
(see, for example, \cite{ha16n}),
which was used to describe
all the $4$-dimensional nilpotent associative algebras \cite{degr1},
all the $4$-dimensional nilpotent bicommutative algebras \cite{kpv19},
all the $5$-dimensional nilpotent Jordan algebras \cite{ha16},
all the $5$-dimensional nilpotent restricted Lie algebras \cite{usefi1},
all the $6$-dimensional nilpotent Lie algebras \cite{degr3,degr2},
all the $6$-dimensional nilpotent Malcev algebras \cite{hac18}
and some others.

$\mathfrak{Remark.}$
Note that, the algebraic classification of all $4$-dimension nilpotent Novikov algebras can be found as a corollary from \cite{bg13},
but in our opinion the method used in this paper is not clear to understand and many calculations were omitted.
Our algebraic classification is given by another method and it confirms the result from \cite{bg13}.

\section{The algebraic classification of nilpotent Novikov algebras}
\subsection{Method of classification of nilpotent algebras}

Throughout this paper, we will use the notations and methods well written in \cite{ha17,hac16,cfk18},
which we have adapted for the Novikov case with some modifications.
From now, we will give only some important definitions.

Let $({\bf A}, \cdot)$ be a Novikov  algebra over  $\mathbb C$
and $\mathbb V$ a vector space over ${\mathbb C}$. Then the $\mathbb C$-linear space ${\rm Z^{2}}\left(
\bf A,\mathbb V \right) $ is defined as the set of all  bilinear maps $\theta  \colon {\bf A} \times {\bf A} \longrightarrow {\mathbb V}$,
such that

\[ \theta(xy,z)=\theta(xz,y), \]
\[ \theta(xy,z)-\theta(x,yz)= \theta(yx,z)-\theta(y,xz). \]
These elements will be called {\it cocycles}. For a
linear map $f$ from $\bf A$ to  $\mathbb V$, if we define $\delta f\colon {\bf A} \times
{\bf A} \longrightarrow {\mathbb V}$ by $\delta f  (x,y ) =f(xy )$, then $\delta f\in {\rm Z^{2}}\left( {\bf A},{\mathbb V} \right) $. We define ${\rm B^{2}}\left({\bf A},{\mathbb V}\right) =\left\{ \theta =\delta f\ : f\in {\rm Hom}\left( {\bf A},{\mathbb V}\right) \right\} $.
One can easily check that ${\rm B^{2}}(\bf A,\mathbb V)$ is a linear subspace of ${\rm Z^{2}}\left( {\bf A},{\mathbb V}\right) $; its elements are called
{\it coboundaries}. We define the {\it second cohomology space} ${\rm H^{2}}\left( {\bf A},{\mathbb V}\right) $ as the quotient space ${\rm Z^{2}}
\left( {\bf A},{\mathbb V}\right) \big/{\rm B^{2}}\left( {\bf A},{\mathbb V}\right) $.

\

Let $\operatorname{Aut}({\bf A}) $ be the automorphism group of the Novikov algebra ${\bf A} $ and let $\phi \in \operatorname{Aut}({\bf A})$. For $\theta \in
{\rm Z^{2}}\left( {\bf A},{\mathbb V}\right) $ define $\phi \theta (x,y)
=\theta \left( \phi \left( x\right) ,\phi \left( y\right) \right) $. Then $\phi \theta \in {\rm Z^{2}}\left( {\bf A},{\mathbb V}\right) $. So, $\operatorname{Aut}({\bf A})$
acts on ${\rm Z^{2}}\left( {\bf A},{\mathbb V}\right) $. It is easy to verify that
 ${\rm B^{2}}\left( {\bf A},{\mathbb V}\right) $ is invariant under the action of $\operatorname{Aut}({\bf A}).$  
 So, we have that $\operatorname{Aut}({\bf A})$ acts on ${\rm H^{2}}\left( {\bf A},{\mathbb V}\right)$.

\

Let $\bf A$ be a Novikov  algebra of dimension $m<n$ over  $\mathbb C$ and ${\mathbb V}$ a $\mathbb C$-vector
space of dimension $n-m$. For any $\theta \in {\rm Z^{2}}\left(
{\bf A},{\mathbb V}\right) $, define on the linear space ${\bf A}_{\theta } = {\bf A}\oplus {\mathbb V}$ the
bilinear product `` $\left[ -,-\right] _{{\bf A}_{\theta }}$'' by $\left[ x+x^{\prime },y+y^{\prime }\right] _{{\bf A}_{\theta }}=
 xy +\theta(x,y) $ for all $x,y\in {\bf A},x^{\prime },y^{\prime }\in {\mathbb V}$.
The algebra ${\bf A}_{\theta }$ is a Novikov algebra which is called an $(n-m)$-{\it dimensional central extension} of ${\bf A}$ by ${\mathbb V}$. Indeed, we have, in a straightforward way, that ${\bf A_{\theta}}$ is a Novikov algebra if and only if $\theta \in {\rm Z^2}({\bf A}, {\mathbb V})$.

We also call the
set $\operatorname{Ann}(\theta)=\left\{ x\in {\bf A}:\theta \left( x, {\bf A} \right)+ \theta \left({\bf A} ,x\right) =0\right\} $
the {\it annihilator} of $\theta $. We recall that the {\it annihilator} of an  algebra ${\bf A}$ is defined as
the ideal $\operatorname{Ann}(  {\bf A} ) =\left\{ x\in {\bf A}:  x{\bf A}+ {\bf A}x =0\right\}$. Observe
 that
$\operatorname{Ann}\left( {\bf A}_{\theta }\right) =\operatorname{Ann}(\theta) \cap\operatorname{Ann}({\bf A})
 \oplus {\mathbb V}$.

\

We have the following  key result:

\begin{lemma}
Let ${\bf A}$ be an $n$-dimensional Novikov algebra such that $\dim (\operatorname{Ann}({\bf A}))=m\neq0$. Then there exists, up to isomorphism, a unique $(n-m)$-dimensional Novikov  algebra ${\bf A}'$ and a bilinear map $\theta \in {\rm Z^2}({\bf A}, {\mathbb V})$ with $\operatorname{Ann}({\bf A})\cap\operatorname{Ann}(\theta)=0$, where $\mathbb V$ is a vector space of dimension m, such that ${\bf A} \cong {{\bf A}'}_{\theta}$ and
 ${\bf A}/\operatorname{Ann}({\bf A})\cong {\bf A}'$.
\end{lemma}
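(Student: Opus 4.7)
The plan is to adapt the classical Skjelbred--Sund construction to the Novikov setting. Set $\mathbf{A}' := \mathbf{A}/\operatorname{Ann}(\mathbf{A})$; since the annihilator of any algebra is a two-sided ideal, $\mathbf{A}'$ inherits the structure of a Novikov algebra (the defining identities descend to quotients), and $\dim \mathbf{A}' = n-m$. Fix a vector space $\mathbb{V}$ of dimension $m$ together with a linear isomorphism $\iota \colon \mathbb{V} \to \operatorname{Ann}(\mathbf{A})$, and choose any linear section $s \colon \mathbf{A}' \to \mathbf{A}$ of the canonical projection $\pi \colon \mathbf{A} \to \mathbf{A}'$, so that as vector spaces $\mathbf{A} = s(\mathbf{A}') \oplus \operatorname{Ann}(\mathbf{A})$.

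Next I would define the prospective cocycle $\theta \colon \mathbf{A}' \times \mathbf{A}' \to \mathbb{V}$ by the formula $\iota\bigl(\theta(a,b)\bigr) = s(a)\,s(b) - s(ab)$, noting that the right-hand side lies in $\operatorname{Ann}(\mathbf{A})$ because $\pi$ is an algebra homomorphism. That $\theta$ lies in $\operatorname{Z}^2(\mathbf{A}',\mathbb{V})$ amounts to a direct translation of the two Novikov identities in $\mathbf{A}$, restricted to elements of the form $s(a)$: the ``error terms'' $s(a)s(b)-s(ab)$ absorb into $\operatorname{Ann}(\mathbf{A})$, and comparing the expansions of $s(x)s(y)\,s(z)$ in two ways (both for the right commutativity relation and for the left-symmetric relation) yields precisely $\theta(xy,z)=\theta(xz,y)$ and $\theta(xy,z)-\theta(x,yz)=\theta(yx,z)-\theta(y,xz)$. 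With $\theta$ in hand, the map $\Phi \colon \mathbf{A}'_\theta \to \mathbf{A}$ given by $\Phi(a + v) = s(a) + \iota(v)$ is a linear isomorphism, and a one-line computation using the definition of the product in $\mathbf{A}'_\theta$ and of $\theta$ shows it is multiplicative.

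To verify the non-degeneracy condition $\operatorname{Ann}(\mathbf{A})\cap\operatorname{Ann}(\theta)=0$ (understood, via $\Phi$, as $\operatorname{Ann}(\mathbf{A}')\cap \operatorname{Ann}(\theta)=0$ inside $\mathbf{A}'$), I would take $a\in \mathbf{A}'$ with $a\,\mathbf{A}'=\mathbf{A}'\,a=0$ and $\theta(a,\mathbf{A}')+\theta(\mathbf{A}',a)=0$; then $s(a)\,s(b) = s(ab)+\iota\theta(a,b)=0$ and likewise $s(b)\,s(a)=0$ for all $b$, while $s(a)$ automatically annihilates $\operatorname{Ann}(\mathbf{A})$. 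Hence $s(a)\in\operatorname{Ann}(\mathbf{A})\cap s(\mathbf{A}')=0$, so $a=0$. The compatibility identity $\operatorname{Ann}(\mathbf{A}_\theta)=\bigl(\operatorname{Ann}(\theta)\cap\operatorname{Ann}(\mathbf{A}')\bigr)\oplus\mathbb{V}$ recorded above then gives the converse interpretation automatically.

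For uniqueness, suppose $\mathbf{A}\cong \mathbf{B}_{\eta}$ for some $(n-m)$-dimensional Novikov algebra $\mathbf{B}$ and some $\eta\in\operatorname{Z}^2(\mathbf{B},\mathbb{V})$ satisfying the same non-degeneracy condition. The non-degeneracy forces $\operatorname{Ann}(\mathbf{B}_\eta)=\mathbb{V}$, so under any such isomorphism the subspace $\mathbb{V}$ maps onto $\operatorname{Ann}(\mathbf{A})$, and passing to quotients one obtains a Novikov algebra isomorphism $\mathbf{B}\cong \mathbf{A}/\operatorname{Ann}(\mathbf{A})\cong\mathbf{A}'$. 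The main obstacle is the cocycle verification in the second step, which is purely computational but must be done carefully because, unlike in the Lie case, both Novikov identities are non-antisymmetric and produce several terms each; once they reduce modulo $\operatorname{Ann}(\mathbf{A})$, however, they collapse exactly to the stated cocycle conditions.
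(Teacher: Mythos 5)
Your proposal is correct and follows essentially the same route as the paper's proof: choosing a complement of $\operatorname{Ann}({\bf A})$ (equivalently, a section of the quotient map) and defining $\theta$ as the annihilator-component of the product is exactly the paper's construction, merely phrased via the quotient ${\bf A}/\operatorname{Ann}({\bf A})$ and a section $s$ instead of an internal complement with projection $P$. You are in fact somewhat more explicit than the paper about the cocycle verification, the non-degeneracy check, and the uniqueness argument, all of which the paper leaves terse.
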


\begin{proof}
Let ${\bf A}'$ be a linear complement of $\operatorname{Ann}({\bf A})$ in ${\bf A}$. Define a linear map $P \colon {\bf A} \longrightarrow {\bf A}'$ by $P(x+v)=x$ for $x\in {\bf A}'$ and $v\in\operatorname{Ann}({\bf A})$, and define a multiplication on ${\bf A}'$ by $[x, y]_{{\bf A}'}=P(x y)$ for $x, y \in {\bf A}'$.
For $x, y \in {\bf A}$, we have
\[P(xy)=P((x-P(x)+P(x))(y- P(y)+P(y)))=P(P(x) P(y))=[P(x), P(y)]_{{\bf A}'}. \]

Since $P$ is a homomorphism we have that $P({\bf A})={\bf A}'$ is a Novikov algebra and
 ${\bf A}/\operatorname{Ann}({\bf A})\cong {\bf A}'$, which gives us the uniqueness. Now, define the map $\theta \colon {\bf A}' \times {\bf A}' \longrightarrow\operatorname{Ann}({\bf A})$ by $\theta(x,y)=xy- [x,y]_{{\bf A}'}$.
  Thus, ${\bf A}'_{\theta}$ is ${\bf A}$ and therefore $\theta \in {\rm Z^2}({\bf A}, {\mathbb V})$ and $\operatorname{Ann}({\bf A})\cap\operatorname{Ann}(\theta)=0$.
\end{proof}

\

\;
\begin{definition}
Given an algebra ${\bf A}$, if ${\bf A}=I\oplus \mathbb Cx$
is a direct sum of two ideals, i.e., $x\in \operatorname{Ann}({\bf A})$, then $\mathbb Cx$ is called an {\it annihilator component} of ${\bf A}$.
\end{definition}
\begin{definition}
A central extension of an algebra $\bf A$ without annihilator component is called a non-split central extension.
\end{definition}

However, in order to solve the isomorphism problem we need to study the
action of $\operatorname{Aut}({\bf A})$ on ${\rm H^{2}}\left( {\bf A},{\mathbb V}
\right) $. To do that, let us fix a basis $e_{1},\ldots ,e_{s}$ of ${\mathbb V}$, and $
\theta \in {\rm Z^{2}}\left( {\bf A},{\mathbb V}\right) $. Then $\theta $ can be uniquely
written as $\theta \left( x,y\right) =
\displaystyle \sum_{i=1}^{s} \theta _{i}\left( x,y\right) e_{i}$, where $\theta _{i}\in
{\rm Z^{2}}\left( {\bf A},\mathbb C\right) $. Moreover, $\operatorname{Ann}(\theta)=\operatorname{Ann}(\theta _{1})\cap\operatorname{Ann}(\theta _{2})\cdots \cap\operatorname{Ann}(\theta _{s})$. Furthermore, $\theta \in
{\rm B^{2}}\left( {\bf A},{\mathbb V}\right) $\ if and only if all $\theta _{i}\in {\rm B^{2}}\left( {\bf A},
\mathbb C\right) $.
It is not difficult to prove (see \cite[Lemma 13]{hac16}), that given a Novikov algebra ${\bf A}_{\theta}$, if we write as
above $\theta \left( x,y\right) = \displaystyle \sum_{i=1}^{s} \theta_{i}\left( x,y\right) e_{i}\in {\rm Z^{2}}\left( {\bf A},{\mathbb V}\right) $ and we have
$\operatorname{Ann}(\theta)\cap \operatorname{Ann}\left( {\bf A}\right) =0$, then ${\bf A}_{\theta }$ has an
annihilator component if and only if $\left[ \theta _{1}\right] ,\left[
\theta _{2}\right] ,\ldots ,\left[ \theta _{s}\right] $ are linearly
dependent in ${\rm H^{2}}\left( {\bf A},\mathbb C\right) $.

\;

Let ${\mathbb V}$ be a finite-dimensional vector space over $\mathbb C$. The {\it Grassmannian} $G_{k}\left( {\mathbb V}\right) $ is the set of all $k$-dimensional
linear subspaces of $ {\mathbb V}$. Let $G_{s}\left( {\rm H^{2}}\left( {\bf A},\mathbb C\right) \right) $ be the Grassmannian of subspaces of dimension $s$ in
${\rm H^{2}}\left( {\bf A},\mathbb C\right) $. There is a natural action of $\operatorname{Aut}({\bf A})$ on $G_{s}\left( {\rm H^{2}}\left( {\bf A},\mathbb C\right) \right) $.
Let $\phi \in \operatorname{Aut}({\bf A})$. For $W=\left\langle
\left[ \theta _{1}\right] ,\left[ \theta _{2}\right] ,\dots,\left[ \theta _{s}
\right] \right\rangle \in G_{s}\left( {\rm H^{2}}\left( {\bf A},\mathbb C
\right) \right) $ define $\phi W=\left\langle \left[ \phi \theta _{1}\right]
,\left[ \phi \theta _{2}\right] ,\dots,\left[ \phi \theta _{s}\right]
\right\rangle $. Then $\phi W\in G_{s}\left( {\rm H^{2}}\left( {\bf A},\mathbb C \right) \right) $. We denote the orbit of $W\in G_{s}\left(
{\rm H^{2}}\left( {\bf A},\mathbb C\right) \right) $ under the action of $\operatorname{Aut}({\bf A})$ by $\operatorname{Orb}(W)$. Given
\[
W_{1}=\left\langle \left[ \theta _{1}\right] ,\left[ \theta _{2}\right] ,\dots,
\left[ \theta _{s}\right] \right\rangle ,W_{2}=\left\langle \left[ \vartheta
_{1}\right] ,\left[ \vartheta _{2}\right] ,\dots,\left[ \vartheta _{s}\right]
\right\rangle \in G_{s}\left( {\rm H^{2}}\left( {\bf A},\mathbb C\right)
\right),
\]
we easily have that in case $W_{1}=W_{2}$, then $ \bigcap\limits_{i=1}^{s}\operatorname{Ann}(\theta _{i})\cap \operatorname{Ann}\left( {\bf A}\right) = \bigcap\limits_{i=1}^{s}
\operatorname{Ann}(\vartheta _{i})\cap\operatorname{Ann}( {\bf A}) $, and therefore we can introduce
the set
\[
{\bf T}_{s}({\bf A}) =\left\{ W=\left\langle \left[ \theta _{1}\right] ,
\left[ \theta _{2}\right] ,\dots,\left[ \theta _{s}\right] \right\rangle \in
G_{s}\left( {\rm H^{2}}\left( {\bf A},\mathbb C\right) \right) : \bigcap\limits_{i=1}^{s}\operatorname{Ann}(\theta _{i})\cap\operatorname{Ann}({\bf A}) =0\right\},
\]
which is stable under the action of $\operatorname{Aut}({\bf A})$.

\

Now, let ${\mathbb V}$ be an $s$-dimensional linear space and let us denote by
${\bf E}\left( {\bf A},{\mathbb V}\right) $ the set of all {\it non-split $s$-dimensional central extensions} of ${\bf A}$ by
${\mathbb V}$. We can write
\[
{\bf E}\left( {\bf A},{\mathbb V}\right) =\left\{ {\bf A}_{\theta }:\theta \left( x,y\right) = \sum_{i=1}^{s}\theta _{i}\left( x,y\right) e_{i} \ \ \text{and} \ \ \left\langle \left[ \theta _{1}\right] ,\left[ \theta _{2}\right] ,\dots,
\left[ \theta _{s}\right] \right\rangle \in {\bf T}_{s}({\bf A}) \right\} .
\]
We also have the following result, which can be proved as \cite[Lemma 17]{hac16}.

\begin{lemma}
 Let ${\bf A}_{\theta },{\bf A}_{\vartheta }\in {\bf E}\left( {\bf A},{\mathbb V}\right) $. Suppose that $\theta \left( x,y\right) =  \displaystyle \sum_{i=1}^{s}
\theta _{i}\left( x,y\right) e_{i}$ and $\vartheta \left( x,y\right) =
\displaystyle \sum_{i=1}^{s} \vartheta _{i}\left( x,y\right) e_{i}$.
Then the Novikov algebras ${\bf A}_{\theta }$ and ${\bf A}_{\vartheta } $ are isomorphic
if and only if
$$\operatorname{Orb}\left\langle \left[ \theta _{1}\right] ,
\left[ \theta _{2}\right] ,\dots,\left[ \theta _{s}\right] \right\rangle =
\operatorname{Orb}\left\langle \left[ \vartheta _{1}\right] ,\left[ \vartheta
_{2}\right] ,\dots,\left[ \vartheta _{s}\right] \right\rangle .$$
\end{lemma}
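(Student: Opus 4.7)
The plan is to follow the Skjelbred--Sund template adapted to the Novikov setting, as in \cite[Lemma 17]{hac16}. Any algebra homomorphism $\Phi\colon {\bf A}_\theta \to {\bf A}_\vartheta$ that preserves $\mathbb V$ setwise can be recorded as a triple $(\phi,\sigma,\psi)$, where $\phi \in \operatorname{Aut}({\bf A})$ is the map induced on the quotient by $\mathbb V$, $\sigma$ is a linear bijection of $\mathbb V$, and $\psi\colon {\bf A}\to \mathbb V$ is a linear map with $\Phi(x) = \phi(x) + \psi(x)$ for $x \in {\bf A}$. Comparing the two sides of $\Phi(x\cdot_\theta y) = \Phi(x)\cdot_\vartheta \Phi(y)$ and splitting along the decomposition ${\bf A}\oplus \mathbb V$ shows that this is a homomorphism if and only if $\sigma\circ \theta = \phi\vartheta - \delta\psi$ as $\mathbb V$-valued cocycles. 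The decisive preliminary observation is that since $\theta,\vartheta \in {\bf E}({\bf A},\mathbb V)$, the formula $\operatorname{Ann}({\bf A}_\theta) = \bigl(\operatorname{Ann}(\theta)\cap \operatorname{Ann}({\bf A})\bigr)\oplus \mathbb V$ recalled in the excerpt collapses to $\operatorname{Ann}({\bf A}_\theta) = \mathbb V = \operatorname{Ann}({\bf A}_\vartheta)$, so every isomorphism $\Phi$ automatically satisfies $\Phi(\mathbb V)=\mathbb V$.

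For the only-if direction I take an isomorphism $\Phi$, extract the triple $(\phi,\sigma,\psi)$ as above, and rewrite the cocycle identity in coordinates: for the matrix $\sigma = (\sigma_{ij})$ in the basis $e_1,\ldots,e_s$ of $\mathbb V$, it becomes $[\phi\vartheta_j] = \sum_{i=1}^{s} \sigma_{ij}[\theta_i]$ in ${\rm H^{2}}({\bf A},\mathbb C)$ for every $j$. Invertibility of $\sigma$ then yields $\phi\langle[\vartheta_1],\dots,[\vartheta_s]\rangle = \langle[\theta_1],\dots,[\theta_s]\rangle$, whence the two orbits coincide. For the converse I simply reverse the same recipe: equality of orbits provides $\phi\in \operatorname{Aut}({\bf A})$, an invertible matrix $\sigma = (\sigma_{ij})$, and linear maps $\psi_j\colon {\bf A}\to \mathbb C$ with $\phi\vartheta_j - \sum_i \sigma_{ij}\theta_i = -\delta\psi_j$; the same formulas then assemble into a linear bijection $\Phi$ which the calculation above certifies as an algebra isomorphism ${\bf A}_\theta \cong {\bf A}_\vartheta$.

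The only substantive obstacle is the preliminary step $\Phi(\mathbb V)=\mathbb V$, which is precisely where membership in ${\bf T}_s({\bf A})$ is essential; without the hypothesis $\operatorname{Ann}(\theta)\cap\operatorname{Ann}({\bf A})=0$ (and its counterpart for $\vartheta$) the subspace $\mathbb V$ would be strictly smaller than the annihilator of the extension and the descent to a well-defined $\phi\in \operatorname{Aut}({\bf A})$ would fail. Everything else is routine cocycle-coboundary bookkeeping that uses nothing about Novikov algebras beyond the guarantee, already built into the definition of ${\rm Z^{2}}({\bf A},\mathbb V)$, that the extensions ${\bf A}_\theta$ and ${\bf A}_\vartheta$ themselves belong to the variety.
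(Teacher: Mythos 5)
Your proof is correct and is essentially the argument the paper relies on: the paper gives no proof of its own but defers to \cite[Lemma 17]{hac16}, and your triple $(\phi,\sigma,\psi)$ decomposition, the cocycle identity $\sigma\circ\theta=\phi\vartheta-\delta\psi$, and the key preliminary step $\operatorname{Ann}({\bf A}_\theta)=\mathbb V$ (forced by membership in ${\bf T}_s({\bf A})$) reproduce exactly that Skjelbred--Sund argument. Nothing is missing.
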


This shows that there exists a one-to-one correspondence between the set of $\operatorname{Aut}({\bf A})$-orbits on ${\bf T}_{s}\left( {\bf A}\right) $ and the set of
isomorphism classes of ${\bf E}\left( {\bf A},{\mathbb V}\right) $. Consequently we have a
procedure that allows us, given a Novikov algebra ${\bf A}'$ of
dimension $n-s$, to construct all non-split central extensions of ${\bf A}'$. This procedure is:

\; \;

{\centerline {\textsl{Procedure}}}

\begin{enumerate}
\item For a given Novikov algebra ${\bf A}'$ of dimension $n-s $, determine ${\rm H^{2}}( {\bf A}',\mathbb {C}) $, $\operatorname{Ann}({\bf A}')$ and $\operatorname{Aut}({\bf A}')$.

\item Determine the set of $\operatorname{Aut}({\bf A}')$-orbits on $T_{s}({\bf A}') $.

\item For each orbit, construct the Novikov algebra associated with a
representative of it.
\end{enumerate}

\

\subsection{Notations}
Let ${\bf A}$ be a Novikov algebra with
a basis $e_{1},e_{2},\dots,e_{n}$. Then by $\Delta _{ij}$\ we will denote the
bilinear form
$\Delta _{ij} \colon {\bf A}\times {\bf A}\longrightarrow \mathbb C$
with $\Delta _{ij}\left( e_{l},e_{m}\right) = \delta_{il}\delta_{jm}$.
Then the set $\left\{ \Delta_{ij}:1\leq i, j\leq n\right\} $ is a basis for the linear space of
the bilinear forms on ${\bf A}$. Then every $\theta \in
{\rm Z^{2}}\left( {\bf A},\mathbb C\right) $ can be uniquely written as $
\theta = \displaystyle \sum_{1\leq i,j\leq n} c_{ij}\Delta _{{i}{j}}$, where $
c_{ij}\in \mathbb C$.
Let us fix the following notations:

$$\begin{array}{lll}
{\mathcal N}^{i*}_j& \mbox{---}& j\mbox{th }i\mbox{-dimensional nilpotent  Novikov algebra with identity $xyz=0$} \\
{\mathcal N}^i_j& \mbox{---}& j\mbox{th }i\mbox{-dimensional nilpotent "pure" Novikov algebra (without identity $xyz=0$)} \\
{\mathfrak{N}}_i& \mbox{---}& i\mbox{-dimensional algebra with zero product} \\
({\bf A})_{i,j}& \mbox{---}& j\mbox{th }i\mbox{-dimensional central extension of }\bf A. \\
\end{array}$$

\subsection{The algebraic classification of  $3$-dimensional nilpotent Novikov algebras}
There are no nontrivial $1$-dimensional nilpotent Novikov algebras.
There is only one nontrivial $2$-dimensional nilpotent Novikov algebra
(it is the non-split central extension of $1$-dimensional algebra with zero product):

$$\begin{array}{ll llll}
{\mathcal N}^{2*}_{01} &:& (\mathfrak{N}_1)_{2,1} &:& e_1 e_1 = e_2.\\
\end{array}$$

Thanks to \cite{cfk18} we have the description of all central extensions of  ${\mathcal N}^{2*}_{01}$ and $\mathfrak{N}_2$.
Choosing the Novikov algebras between the central extensions of these algebras,
we have the classification of all non-split $3$-dimensional nilpotent Novikov algebras:

$$\begin{array}{ll llllllllllll}
{\mathcal N}^{3*}_{02} &:& (\mathfrak{N}_2)_{3,1} &:& e_1 e_1 = e_3, &  e_2 e_2=e_3; \\
{\mathcal N}^{3*}_{03} &:& (\mathfrak{N}_2)_{3,2} &:& e_1 e_2=e_3, & e_2 e_1=-e_3;   \\
{\mathcal N}^{3*}_{04}(\lambda) &:& (\mathfrak{N}_2)_{3,3} &:&
e_1 e_1 = \lambda e_3,  & e_2 e_1=e_3,  & e_2 e_2=e_3, & \lambda \neq 0; \\
{\mathcal N}^{3*}_{04}(0) &:& (\mathfrak{N}_2)_{3,3} &:&   e_1 e_2=e_3,  \\
{\mathcal N}^3_{01} &:& ({\mathcal N}^{2*}_{01} )_{3,1} &:& e_1 e_1 = e_2,  & e_2 e_1=e_3;  \\
{\mathcal N}^3_{02}(\lambda) &:& ({\mathcal N}^{2*}_{01} )_{3,2} &:& e_1 e_1 = e_2, & e_1 e_2=e_3, & e_2 e_1=\lambda e_3, & \lambda\in {\mathbb C}.
\end{array} $$

\subsection{$1$-dimensional central extensions of $3$-dimensional  nilpotent Novikov algebras}
\label{centrext}
\subsubsection{The description of second cohomology space of  $3$-dimensional nilpotent Novikov algebras}

\
In the following table we give the description of the second cohomology space of  $3$-dimensional nilpotent Novikov algebras

{\tiny
$$
\begin{array}{|l|l|l|l|}
\hline
\bf A  & {\rm Z^{2}}\left( {\bf A}\right)  & {\rm B^2}({\bf A}) & {\rm H^2}({\bf A}) \\
\hline
\hline
{\mathcal N}^{3*}_{01} &  \langle
\Delta_{11},\Delta_{12}, \Delta_{13}, \Delta_{21},\Delta_{31}, \Delta_{33}\rangle
&\langle \Delta_{11} \rangle&
\langle[\Delta_{12}], [\Delta_{13}], [\Delta_{21}], [\Delta_{31}], [\Delta_{33}] \rangle\\
\hline

{\mathcal N}^{3*}_{02} &  \langle  \Delta_{11},\Delta_{12}, \Delta_{21}, \Delta_{22} \rangle
& \langle\Delta_{11}+\Delta_{22}\rangle &  \langle [\Delta_{12}], [\Delta_{21}], [\Delta_{22}] \rangle \\
\hline
{\mathcal N}^{3*}_{03} & \langle  \Delta_{11},\Delta_{12}, \Delta_{21}, \Delta_{22} \rangle
& \langle\Delta_{12} -\Delta_{21} \rangle&  \langle [\Delta_{11}], [\Delta_{21}], [\Delta_{22}] \rangle \\
\hline
{\mathcal N}^{3*}_{04}(\lambda)_{\lambda\neq 0} & \langle  \Delta_{11},\Delta_{12}, \Delta_{21}, \Delta_{22} \rangle & \langle\lambda\Delta_{11}+\Delta_{21}+\Delta_{22}\rangle &  \langle [\Delta_{12}], [\Delta_{21}], [\Delta_{22}] \rangle \\

\hline
{\mathcal N}^{3*}_{04}(0) &
\Big\langle
\begin{array}{l} \Delta_{11},\Delta_{12},  \Delta_{13}, \\
\Delta_{21}, \Delta_{22}, \Delta_{23}-\Delta_{32}
\end{array}

\Big\rangle & \langle \Delta_{12} \rangle &

\Big\langle
\begin{array}{l}
[\Delta_{11}], [\Delta_{13}], [\Delta_{21}], \\

[\Delta_{22}],   [\Delta_{23}]-[\Delta_{32}]

\end{array} \Big\rangle

\\

\hline
{\mathcal N}^3_{01} &\langle \Delta_{11},\Delta_{12}, \Delta_{21},\Delta_{13}-\Delta_{31}\rangle& \langle \Delta_{11}, \Delta_{21}\rangle &  \langle [\Delta_{12}], [\Delta_{13}]-[\Delta_{31}] \rangle \\
\hline
{\mathcal N}^3_{02}(\lambda) &\Big\langle
\begin{array}{l}\Delta_{11},\Delta_{12}, \Delta_{21},\\ (2-\lambda)\Delta_{13}+\lambda(\Delta_{22}+\Delta_{31}) \end{array} \Big\rangle&
\Big\langle
\begin{array}{l} \Delta_{11},\\  \Delta_{12}+ \lambda \Delta_{21}
\end{array}\Big\rangle &
\Big\langle
\begin{array}{l} [\Delta_{21}], \\
(2-\lambda)[\Delta_{13}]+\lambda([\Delta_{22}]+[\Delta_{31}])
\end{array} \Big\rangle \\

\hline
\end{array}$$
}
where ${\mathcal N}^{3*}_{01}={\mathcal N}^{2*}_{01}\oplus{\mathbb C e_3}.$

\begin{remark}
From the description of cocycles of the algebras ${\mathcal N}^{3*}_{02}, {\mathcal N}^{3*}_{03},
{\mathcal N}^{3*}_{04}(\lambda)_{\lambda\neq0}$, it is not difficult to see that the $1$-dimensional central extensions of these algebras are
 $2$-dimensional central extensions of $2$-dimensional nilpotent Novikov algebras.
 Thanks to  \cite{cfk18} we have the description of all non-split $2$-dimensional central extensions of
 $2$-dimensional nilpotent Novikov algebras:

$$\begin{array}{lllllllll}
{\mathcal N}^4_{03} &:&  ({\mathcal N}^{2*}_{01} )_{4,1} &:& e_1 e_1 = e_2, & e_1 e_2=e_4, & e_2 e_1=e_3.  \\
\end{array}$$

\end{remark}





\subsubsection{Central extensions of ${\mathcal N}^{3*}_{01}$}


Let us use the following notations:
\[
\nabla_1=[\Delta_{12}], \nabla_2=[\Delta_{13}], \nabla_3=[\Delta_{21}], \nabla_4=[\Delta_{31}], \nabla_5=[\Delta_{33}]. \]

The automorphism group of ${\mathcal N}^{3*}_{01}$ consists of invertible matrices of the form

\[\phi=\begin{pmatrix}
x & 0 & 0\\
u & x^2 & w\\
z & 0 & y
\end{pmatrix}. \]

Since

\[ \phi^T\begin{pmatrix}
0 & \alpha_1 & \alpha_2\\
\alpha_3 & 0 & 0\\
\alpha_4 & 0 & \alpha_5
\end{pmatrix}\phi =
\begin{pmatrix}
\alpha^* & x^3 \alpha_1 & w x \alpha_1 + y (x \alpha_2 + z \alpha_5) \\
x^3 \alpha_3 & 0 & 0 \\
x (w \alpha_3 + y \alpha_4) + y z \alpha_5 & 0 & y^2 \alpha_5
\end{pmatrix},
\]
we have that the action of $\operatorname{Aut} (\mathcal{N}_{01}^{3*})$ on subspace
$\Big\langle \sum\limits_{i=1}^5 \alpha_i\nabla_i \Big\rangle$ is given by
$\Big\langle \sum\limits_{i=1}^5 \alpha_i^*\nabla_i \Big\rangle,$
where

\[\begin{array}{rcl}
\alpha^*_1&=&x^3 \alpha_1;\\
\alpha^*_2&=&w x \alpha_1 + y x \alpha_2 + y z \alpha_5;\\
\alpha^*_3&=&x^3 \alpha_3;\\
\alpha^*_4&=& w x \alpha_3 + y x\alpha_4 + y z \alpha_5;\\
\alpha^*_5&=&y^2 \alpha_5.
\end{array}\]

It is easy to see that the elements $\alpha_1 \nabla_1 +\alpha_3\nabla_3$ and  $\alpha_2\nabla_2 +\alpha_4\nabla_4+\alpha_5\nabla_5$ give algebras with $2$-dimensional annihilator, which were found before.
Since we are interested  only in new algebras,  we have the following cases:

\begin{enumerate}
    \item  $\alpha_1\neq0, \alpha_3\neq0, \alpha_5\neq0,$ then:
   \begin{enumerate}
        \item if $\alpha_1\neq\alpha_3,$ then choosing
        $x=\frac{1}{\sqrt[3]{\alpha_1}}, y=\frac{1}{\sqrt{\alpha_5}}, w =\frac{ y (\alpha_2 - \alpha_4)}{\alpha_3- \alpha_1},
        z=\frac{x(\alpha_1\alpha_4 -\alpha_2\alpha_3)}{ \alpha_5(\alpha_3- \alpha_1)}, $
        we have the representative
        $\langle\nabla_1+\alpha \nabla_3+\nabla_5  \rangle_{\alpha\neq0; 1}.$

        \item if $\alpha_1=\alpha_3,$ $\alpha_2\neq\alpha_4,$ then choosing
        $x=\frac{(\alpha_2-\alpha_4)^2}{\alpha_1\alpha_5}, y=\frac{(\alpha_2-\alpha_4)^3}{\alpha_1\alpha_5^2}, w =-\frac{ y x \alpha_4 + y z \alpha_5}{x \alpha_1},
         $
        we have the representative
        $\langle \nabla_1+ \nabla_2+\nabla_3+\nabla_5 \rangle.$

        \item if $\alpha_1=\alpha_3,$ $\alpha_2=\alpha_4,$ then choosing
        $x=\frac{1}{\sqrt[3]{\alpha_1}}, y=\frac{1}{\sqrt{\alpha_5}}, w =-\frac{ y x \alpha_4 + y z \alpha_5}{x \alpha_1},
          $
        we have the representative
        $\langle \nabla_1+\nabla_3+\nabla_5 \rangle.$

    \end{enumerate}
\item $\alpha_1\neq0, \alpha_3\neq0, \alpha_5=0,$ then:
\begin{enumerate}
    \item if $\alpha_1\alpha_4\neq\alpha_2\alpha_3,$ then choosing
    $x=\frac{1}{\sqrt[3]{\alpha_1}}, w = -\frac{y \alpha_4}{ \alpha_3}, y=\frac{x^2\alpha_1}{\alpha_2\alpha_3-\alpha_1\alpha_4},$
    we have the representative
        $\langle \nabla_1+\nabla_2+\alpha \nabla_3 \rangle_{\alpha\neq0}.$
    \item if $\alpha_1\alpha_4=\alpha_2\alpha_3,$  then choosing
        $x=\frac{1}{\sqrt[3]{\alpha_1}}, w = -\frac{y \alpha_4}{ \alpha_3},$
        we have the representative
    $\langle \nabla_1+\alpha \nabla_3 \rangle_{\alpha\neq 0}.$

\end{enumerate}
\item $\alpha_1\neq0, \alpha_3=0, \alpha_5\neq0,$ then choosing
        $x=\frac{1}{\sqrt[3]{\alpha_1}}, w =-\frac{ y x \alpha_2 + y z \alpha_5}{x \alpha_1},
        z=-\frac{ x\alpha_4}{   \alpha_5}, y=\frac{1}{\sqrt{\alpha_5}}, $
        we have the representative $ \langle \nabla_1+\nabla_5 \rangle. $

\item $\alpha_1=0, \alpha_3\neq0, \alpha_5\neq0,$ then choosing
        $x=\frac{1}{\sqrt[3]{\alpha_3}}, z =-\frac{  x \alpha_2}{  \alpha_5},
        w=-\frac{y x\alpha_4+ y z \alpha_5}{ x \alpha_3}, y=\frac{1}{\sqrt{\alpha_5}}, $
        we have the representative $\langle  \nabla_3+\nabla_5 \rangle. $




\item $\alpha_1=0, \alpha_3\neq0, \alpha_5=0,$ then:
\begin{enumerate}
    \item if $\alpha_2\neq0,$ then choosing
    $y=\frac{x^2\alpha_3}{\alpha_2}, x=\frac{1}{\sqrt[3]{\alpha_3}}, w=-\frac{y\alpha_4}{\alpha_3},$
    we have the representative $\langle \nabla_2+\nabla_3 \rangle.$

\item if $\alpha_2=0,$ then choosing
        $x=\frac{1}{\sqrt[3]{\alpha_3}}, w=-\frac{y\alpha_4}{\alpha_3},$
        we have the representative  $\langle \nabla_3 \rangle.$

\end{enumerate}

\item $\alpha_1\neq0, \alpha_3=0, \alpha_5=0,$ then:
    \begin{enumerate}
    \item if $\alpha_4\neq0,$ then choosing
    $x=\frac{1}{\sqrt[3]{\alpha_1}}, w=-\frac{y\alpha_2}{\alpha_1}, y=\frac{x^2\alpha_1}{\alpha_4},$ we have the representative $\langle \nabla_1+\nabla_4 \rangle.$

    \item if $\alpha_4=0,$ then choosing
    $x=\frac{1}{\sqrt[3]{\alpha_1}}, w=-\frac{y\alpha_2}{\alpha_1},$ we have the representative  $\langle \nabla_1 \rangle.$
\end{enumerate}




\end{enumerate}

Now we have all the new $4$-dimensional nilpotent Novikov algebras constructed from   ${\mathcal N}^{3*}_{01}:$
\[ {\mathcal N}^4_{04}(\alpha), \ {\mathcal N}^4_{05}, \ {\mathcal N}^4_{06}(\alpha)_{\alpha\neq 0}, \ {\mathcal N}^4_{07}, \ {\mathcal N}^4_{08}, \ {\mathcal N}^4_{09}. \]
The multiplication tables of these algebras can be found in Appendix A.

\subsubsection{Central extensions of ${\mathcal N}^{3*}_{04}(0)$}



Let us use the following notations:
\[\nabla_1=[\Delta_{11}], \nabla_2=[\Delta_{13}], \nabla_3=[\Delta_{21}], \nabla_4=[\Delta_{22}],
\nabla_5=[\Delta_{23}-\Delta_{32}].\]

The automorphism group of ${\mathcal N}^{3*}_{04}(0)$ consists of invertible matrices of the form

\[\phi=\left(
                             \begin{array}{ccc}
                               x & 0 & 0   \\
                               0 & y & 0  \\
                               z & t & xy                               \end{array}\right)
                               .\]

Since
\[
\phi^T
                           \left(\begin{array}{ccc}
                                \alpha_1& 0 & \alpha_2  \\
                                 \alpha_3 & \alpha_4 & \alpha_5  \\
                                 0 & -\alpha_5 & 0 \\
                             \end{array}
                           \right)\phi
                           =\left(\begin{array}{ccc}
                                 x(x\alpha_1+z\alpha_2) & \alpha^* & x^2y\alpha_2   \\
                                 y(x\alpha_3 +z\alpha_5)  &
                                  y^2\alpha_4& xy^2\alpha_5  \\
                                 0 & -xy^2\alpha_5 & 0 \\
                             \end{array}\right),\]
we have that the action of $\operatorname{Aut} ({\mathcal N}^{3*}_{04}(0))$ on the subspace
$\langle  \sum\limits_{i=1}^5\alpha_i \nabla_i \rangle$
is given by
$\langle  \sum\limits_{i=1}^5\alpha^*_i \nabla_i \rangle,$ where

\[
\begin{array}{rcl}
\alpha^*_1&=&x(x\alpha_1+z\alpha_2);\\
\alpha^*_2&=&x^2y\alpha_2;\\
\alpha^*_3&=&y(x\alpha_3+z\alpha_5);\\
\alpha^*_4&=&y^2\alpha_4;\\
\alpha^*_5&=&xy^2\alpha_5.
\end{array}\]

It is easy to see that the elements $\alpha_1 \nabla_1 +\alpha_3\nabla_3 +\alpha_4\nabla_4$ give algebras
which are central extensions of $2$-dimensional algebras.
We find the following new cases:

\begin{enumerate}
\item $\alpha_4=\alpha_5=0, \alpha_2 \neq 0,$ then: 
\begin{enumerate}

    \item if $\alpha_3= 0,$ then choosing $z=-\frac{x\alpha_1}{\alpha_2}$, we have the representative
        $\langle \nabla_2 \rangle$.

    \item if $\alpha_3\neq0,$ then choosing $z=-\frac{x\alpha_1}{\alpha_2}$ and $x=\frac{\alpha_3}{\alpha_2},$
        we have the representative 
 $\langle \nabla_2 +\nabla_3\rangle$.

\end{enumerate}

\item $\alpha_4=\alpha_2=0, \alpha_5\neq 0,$ then:
\begin{enumerate}

    \item if $\alpha_1=0,$ then choosing $z=-\frac{x\alpha_3}{\alpha_5},$ we have the representative 
   $\langle \nabla_5\rangle$.

    \item if $\alpha_1\neq0,$ then choosing $z=-\frac{x\alpha_3}{\alpha_5}$ and $x=\frac{y^2\alpha_5}{\alpha_1},$ we have the representative 
   $\langle   \nabla_1+\nabla_5\rangle$.

\end{enumerate}

\item $\alpha_4=0, \alpha_5\neq0, \alpha_2\neq0,$ then:
\begin{enumerate}

 \item if  $\alpha_1\alpha_5-\alpha_2\alpha_3= 0,$ then choosing $z=-\frac{x\alpha_3}{\alpha_5}$ and $x=\frac{y\alpha_5}{\alpha_2},$ we have the representative
  $\langle \nabla_2+\nabla_5  \rangle$.

  \item if $\alpha_1\alpha_5-\alpha_2\alpha_3\neq 0,$ then choosing $z=-\frac{x\alpha_3}{\alpha_5},$
   $y=\frac{\alpha_1\alpha_5-\alpha_2\alpha_3}{\alpha_2\alpha_5}$ and
   $x=\frac{\alpha_1\alpha_5-\alpha_2\alpha_3}{\alpha_2^2},$  we have the representative
   $\langle \nabla_1+\nabla_2+\nabla_5 \rangle$.

\end{enumerate}

\item $\alpha_4\neq0, \alpha_5=0, \alpha_2\neq0,$ then:
\begin{enumerate}

 \item if  $\alpha_3= 0,$ then choosing $z=-\frac{x\alpha_1}{\alpha_2}$ and $y=\frac{x^2\alpha_2}{\alpha_4}$, we have the representative 
   $\langle
   \nabla_2+\nabla_4 \rangle$.

 \item if $\alpha_3\neq 0,$ then choosing $z=-\frac{x\alpha_1}{\alpha_2},$  $y=\frac{\alpha_3^2}{\alpha_2\alpha_4}$ and
   $x=\frac{\alpha_3}{\alpha_2}$,  we have the representative
   $\langle
   \nabla_2+\nabla_3+\nabla_4 \rangle$.

\end{enumerate}

\item $\alpha_4\neq0, \alpha_5\neq 0, \alpha_2=0,$ then:
\begin{enumerate}

 \item  if $\alpha_1= 0,$ then choosing $z=-\frac{x\alpha_3}{\alpha_5}$ and $x=\frac{\alpha_4}{\alpha_5}$, we have the representative
   $\langle \nabla_4+\nabla_5 \rangle$.

 \item if  $\alpha_1\neq 0,$ then choosing $z=-\frac{x\alpha_3}{\alpha_5},$  $y=\sqrt{\frac{\alpha_4\alpha_1}{\alpha_5^2}}$ and
   $x=\frac{\alpha_4}{\alpha_5}$,  we have the representative
   $\langle
   \nabla_1+\nabla_4+\nabla_5 \rangle$.

\end{enumerate}

\item $\alpha_4\neq0, \alpha_5\neq 0, \alpha_2\neq0,$ then choosing $z=-\frac{x\alpha_3}{\alpha_5},$  $y=\frac{\alpha_2\alpha_4}{\alpha_5^2}$,
   $x=\frac{\alpha_4}{\alpha_5}$ and
   $\alpha=\frac{\alpha_5(\alpha_1\alpha_5-\alpha_2\alpha_3)}{\alpha_2^2\alpha_4}$,  we have the representative 
   $\langle \alpha\nabla_1+\nabla_2+\nabla_4+\nabla_5 \rangle$.

\end{enumerate}

Now we have all the new $4$-dimensional nilpotent Novikov algebras constructed from  ${\mathcal
N}^{3*}_{04}:$
\[ {\mathcal N}^4_{10}, \ \ldots, {\mathcal N}^{4}_{20}(\alpha). \]
The multiplication tables of these algebras can be found in Appendix A.

\subsubsection{Central extensions of ${\mathcal N}^3_{01}$}


Let us use the following notations
\[ \nabla_1=[\Delta_{12}], \nabla_2=[\Delta_{13}]-[\Delta_{31}].\]

The automorphism group of ${\mathcal N}^3_{01}$ consists of invertible matrices of the form

\[\phi=\left(
                             \begin{array}{ccc}
                               x & 0 & 0   \\
                               y & x^2 & 0  \\
                               z & xy & x^3  \\
                             \end{array}
                           \right).\]

Since
\[
\phi^T
                           \left(\begin{array}{ccc}
                                0& \alpha_1 & \alpha_2  \\
                                 0 & 0 & 0  \\
                                 -\alpha_2 & 0 & 0 \\
                             \end{array}
                           \right)\phi
                           =
                             \left(\begin{array}{ccc}
                                 \alpha^* & x^3\alpha_1  +x^2y\alpha_2& x^4\alpha_2   \\
                                 \alpha^{**} & 0 & 0  \\
                                 -x^4\alpha_2  & 0 & 0 \\
                             \end{array}\right),\]

 we have that the action of $\operatorname{Aut} ({\mathcal N}^3_{01})$ on the subspace
$\Big \langle  \sum\limits_{i=1}^{2}\alpha_i \nabla_i \Big\rangle$
is given by
$\Big \langle  \sum\limits_{i=1}^{2}\alpha_i^* \nabla_i \Big\rangle,$
where
$$\begin{array}{rcl}
\alpha_1^* &=& x^3 \alpha_1 + x^2y\alpha_2\\  \
\alpha_2^* &=& x^4\alpha_2 .
\end{array}$$

Since the $2$-dimensional central extension of two dimensional algebras was already considered, we have $\alpha_2 \neq 0.$
Choosing $y=-\frac{\alpha_1y}{\alpha_2},$ $x=\frac{1}{\sqrt[4]{\alpha_2}},$  we have the representative  $\langle \nabla_2\rangle.$

Now we have the unique new $4$-dimensional nilpotent Novikov algebras constructed from    ${\mathcal N}^{3}_{01}:$
\[
{\mathcal N}^4_{21}.  \]

The multiplication table of this algebra can be found in Appendix A.

\subsubsection{Central extensions of ${\mathcal N}^3_{02}(\lambda)$}


Let us use the following notations
\[\nabla_1=[\Delta_{21}], \nabla_2=[(2-\lambda)\Delta_{13}+\lambda(\Delta_{22}+\Delta_{31})].\]

The automorphism group of ${\mathcal N}^3_{02}(\lambda)$ consists of invertible matrices of the form

\[\phi=\left(
                             \begin{array}{ccc}
                               x & 0 & 0   \\
                               y & x^2 & 0  \\
                               z & xy(1+\lambda) & x^3  \\
                             \end{array}\right)                       .\]

Since
\[
\phi^T
                           \left(\begin{array}{ccc}
                                0& 0 & (2-\lambda)\alpha_2  \\
                                 \alpha_1 & \lambda\alpha_2 & 0  \\
                                 \lambda\alpha_2 & 0 & 0 \\
                             \end{array}
                           \right)\phi
                           =
                             \left(\begin{array}{ccc}
                                 \alpha^* & (2+2\lambda-\lambda^2)x^2y\alpha_2 & (2-\lambda)x^4\alpha_2   \\
                                 x^3\alpha_1 + \lambda (2+\lambda) x^2y\alpha_2 & \lambda x^4\alpha_2 & 0  \\
                                 \lambda x^4\alpha_2 & 0 & 0 \\
                             \end{array}\right),\]

 we have that the action of $\operatorname{Aut} ({\mathcal N}^3_{02}(\lambda))$ on the subspace
$\langle  \sum\limits_{i=1}^{2} \alpha_i \nabla_i \rangle$
is given by
$\langle  \sum\limits_{i=1}^{2} \alpha_i^* \nabla_i \rangle,$
where

$$\begin{array}{rcl}
\alpha_1^* &=& x^3\alpha_1 + \lambda^2 (\lambda-1) x^2y\alpha_2 ,\\
\alpha^*_2 &=& x^4 \alpha_2 .
\end{array}$$

Since the $2$-dimensional central extension of two dimensional algebras was already considered, we have $\alpha_2 \neq 0.$ We have the following cases:

\begin{enumerate}
\item if $\lambda\neq 0, 1,$  then choosing $x=\frac{1}{\sqrt[4]{\alpha_2}},$ $y=-\frac{x\alpha_1}{\lambda^2(\lambda-1)\alpha_2},$  we have the representative $\langle \nabla_2\rangle.$

\item if $\lambda = 0$ or $\lambda=1,$ and $\alpha_1 = 0,$ then choosing
$x=\frac{\alpha_1}{\alpha_2},$ we have the representative $\langle \nabla_2\rangle.$

\item if $\lambda = 0$ or $\lambda=1,$ and $\alpha_1 \neq 0,$  then choosing
then choosing
$x=\frac{\alpha_1}{\alpha_2},$ we have the representative $\langle \nabla_1+\nabla_2\rangle.$

\end{enumerate}


Now we have all the new $4$-dimensional algebras constructed from  ${\mathcal N}^{3}_{02}(\lambda):$
\[
{\mathcal N}^4_{22}(\lambda), \ {\mathcal N}^4_{23}, \ {\mathcal N}^4_{24}. \]
The multiplication tables of these algebras can be found in Appendix A.

\subsection{The algebraic classification of $4$-dimensional nilpotent Novikov algebras}

Recall that the class of $n$-dimensional algebras defined by the identities $(xy)z=0$ and $x(yz)=0$
lies in the intersection of all the varieties of algebras defined by some family of polynomial identities of degree $3,$
such as Leibniz algebras, Zinbiel algebras or associative algebras.
On the other side,
every algebra defined by the identities $(xy)z=0$ and $x(yz)=0$ is a central extension of some suitable algebra with zero product.
The list of all  non-anticommutative $4$-dimensional algebras defined by the identities $(xy)z=0$ and $x(yz)=0$  can be found in \cite{demir}.
Note that there is only one $4$-dimensional nilpotent anticommutative algebra with identity  $(xy)z=0.$
Obviously every algebra from this list is a $4$-dimensional nilpotent "non-pure" Novikov algebra.
Our aim in the present part of the work is to find all $4$-dimensional nilpotent  "pure" Novikov algebras
which are different from the class of algebras defined by the identities $(xy)z=0$ and $x(yz)=0.$

Now we are ready to formulate the main result of this part of the paper.
The proof of the present theorem is  based on the classification of $3$-dimensional nilpotent Novikov algebras and the results of Section \ref{centrext}.

\begin{theorem}
Let $\mathcal N$ be a nonzero  $4$-dimensional nilpotent "pure" Novikov algebra over $\mathbb C.$
Then, $\mathcal N$ is isomorphic to one of the algebras listed in Table A (see Appendix).
\end{theorem}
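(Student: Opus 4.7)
The plan is to invoke Lemma 1 as the main structural tool. Every nonzero nilpotent algebra has a nonzero annihilator, so if $\mathcal N$ is a $4$-dimensional nilpotent Novikov algebra with $\dim \operatorname{Ann}(\mathcal N) = m \geq 1$, then Lemma 1 presents $\mathcal N$ as ${\mathcal N'}_\theta$ where $\mathcal N'$ is a $(4-m)$-dimensional nilpotent Novikov algebra and $\theta \in {\rm Z}^2(\mathcal N',{\mathbb V})$ with $\operatorname{Ann}(\mathcal N') \cap \operatorname{Ann}(\theta) = 0$. Since we are classifying up to isomorphism, we may further assume the extension is non-split (annihilator components of $\mathcal N$ can be peeled off and correspond to extending a smaller algebra by a strictly smaller annihilator), so by Lemma~2 (the orbit criterion) the problem reduces to enumerating $\operatorname{Aut}(\mathcal N')$-orbits on ${\bf T}_s(\mathcal N')$ for $s = m$ and for every lower-dimensional nilpotent Novikov algebra $\mathcal N'$.

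Next I would run the \emph{Procedure} of Section~1.1 case by case over the list of smaller nilpotent Novikov algebras. The $1$-dimensional case only yields $\mathfrak{N}_1$ and the $2$-dimensional nilpotent Novikov algebras are $\mathfrak{N}_2$ and $\mathcal N^{2*}_{01}$; their iterated central extensions were handled in \cite{cfk18} and cited in Section~1.3. For $s=1$ over $3$-dimensional algebras, the list of $3$-dimensional nilpotent Novikov algebras from Section~1.3 provides all possible $\mathcal N'$, and Section~\ref{centrext} carries out, for each one, the computation of $\operatorname{H}^2(\mathcal N',\mathbb C)$, of $\operatorname{Aut}(\mathcal N')$, and of the transformation law on subspaces $\langle \sum \alpha_i \nabla_i\rangle$, reading off orbit representatives. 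At each step one must discard (i) representatives whose annihilator subspace already lies in $\operatorname{Ann}(\mathcal N')$, (ii) representatives that yield an annihilator component (hence a split extension), and (iii) representatives producing algebras that already appeared as extensions of a smaller algebra (including the non-pure algebras satisfying $(xy)z = x(yz) = 0$, tabulated in \cite{demir}).

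Collecting the surviving representatives across all base algebras gives the list of candidates $\mathcal N^4_{01}, \ldots, \mathcal N^4_{24}$ compiled in Table~A. To finish I would verify that the resulting algebras are pairwise non-isomorphic: within a fixed base $\mathcal N'$ this is automatic from Lemma~2 since they correspond to distinct $\operatorname{Aut}(\mathcal N')$-orbits, while across different bases it follows from $\mathcal N/\operatorname{Ann}(\mathcal N) \cong \mathcal N'$ being an isomorphism invariant, combined with the numerical invariants $\dim \operatorname{Ann}(\mathcal N)$, $\dim \mathcal N^2$, $\dim \mathcal N^3$, etc., which distinguish the $\mathcal N'$'s themselves up to isomorphism.

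The main obstacle I anticipate is the orbit enumeration in the larger cases, in particular for $\mathcal N^{3*}_{01}$ and $\mathcal N^{3*}_{04}(0)$, where $\dim\operatorname{H}^2 = 5$ and the action of a $4$- or $5$-parameter automorphism group on a $5$-dimensional cohomology space forces a branching analysis on which coefficients $\alpha_i$ vanish. The care required is to exhaust every branch while avoiding double-counting, and to recognize when an apparently new orbit is in fact a central extension of a lower-dimensional algebra (and hence already accounted for), so that only genuinely new $4$-dimensional pure Novikov algebras are added to Table~A.
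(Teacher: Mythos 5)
Your proposal follows essentially the same route as the paper: the paper's proof of this theorem consists precisely of invoking the Skjelbred--Sund machinery of Lemmas 1 and 2, running the stated Procedure over each $3$-dimensional nilpotent Novikov algebra in Section 1.4 (with the extensions of $\mathcal{N}^{3*}_{02}$, $\mathcal{N}^{3*}_{03}$, $\mathcal{N}^{3*}_{04}(\lambda)_{\lambda\neq 0}$ deferred to the $2$-dimensional central extensions of $2$-dimensional algebras from the cited reference, and the non-pure algebras satisfying $(xy)z=x(yz)=0$ set aside via the reference to the Leibniz classification), exactly as you describe. Your additional remarks on discarding split extensions and on distinguishing algebras across different base algebras via the invariant $\mathcal{N}/\operatorname{Ann}(\mathcal{N})$ are consistent with the paper's framework, so the proposal is correct and not a genuinely different argument.
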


\section{The geometric classification of nilpotent Novikov algebras}

\subsection{Definitions and notation}
Given an $n$-dimensional vector space $\mathbb V$, the set ${\rm Hom}(\mathbb V \otimes \mathbb V,\mathbb V) \cong \mathbb V^* \otimes \mathbb V^* \otimes \mathbb V$
is a vector space of dimension $n^3$. This space has the structure of the affine variety $\mathbb{C}^{n^3}.$ Indeed, let us fix a basis $e_1,\dots,e_n$ of $\mathbb V$. Then any $\mu\in {\rm Hom}(\mathbb V \otimes \mathbb V,\mathbb V)$ is determined by $n^3$ structure constants $c_{ij}^k\in\mathbb{C}$ such that
$\mu(e_i\otimes e_j)=\sum\limits_{k=1}^nc_{ij}^ke_k$. A subset of ${\rm Hom}(\mathbb V \otimes \mathbb V,\mathbb V)$ is {\it Zariski-closed} if it can be defined by a set of polynomial equations in the variables $c_{ij}^k$ ($1\le i,j,k\le n$).

Let $T$ be a set of polynomial identities.
Every algebra structure on $\mathbb V$ satisfying polynomial identities from $T$ forms a Zariski-closed subset of the variety ${\rm Hom}(\mathbb V \otimes \mathbb V,\mathbb V)$. We denote this subset by $\mathbb{L}(T)$.
The general linear group $GL(\mathbb V)$ acts on $\mathbb{L}(T)$ by conjugations:
$$ (g * \mu )(x\otimes y) = g\mu(g^{-1}x\otimes g^{-1}y)$$
for $x,y\in \mathbb V$, $\mu\in \mathbb{L}(T)\subset {\rm Hom}(\mathbb V \otimes\mathbb V, \mathbb V)$ and $g\in GL(\mathbb V)$.
Thus, $\mathbb{L}(T)$ is decomposed into $GL(\mathbb V)$-orbits that correspond to the isomorphism classes of algebras.
Let $O(\mu)$ denote the orbit of $\mu\in\mathbb{L}(T)$ under the action of $GL(\mathbb V)$ and $\overline{O(\mu)}$ denote the Zariski closure of $O(\mu)$.

Let $\mathcal A$ and $\mathcal B$ be two $n$-dimensional algebras satisfying the identities from $T$, and let $\mu,\lambda \in \mathbb{L}(T)$ represent $\mathcal A$ and $\mathcal B$, respectively.
We say that $\mathcal A$ degenerates to $\mathcal B$ and write $\mathcal A\to \mathcal B$ if $\lambda\in\overline{O(\mu)}$.
Note that in this case we have $\overline{O(\lambda)}\subset\overline{O(\mu)}$. Hence, the definition of a degeneration does not depend on the choice of $\mu$ and $\lambda$. If $\mathcal A\not\cong \mathcal B$, then the assertion $\mathcal A\to \mathcal B$ is called a {\it proper degeneration}. We write $\mathcal A\not\to \mathcal B$ if $\lambda\not\in\overline{O(\mu)}$.

Let $\mathcal A$ be represented by $\mu\in\mathbb{L}(T)$. Then  $\mathcal A$ is  {\it rigid} in $\mathbb{L}(T)$ if $O(\mu)$ is an open subset of $\mathbb{L}(T)$.
 Recall that a subset of a variety is called irreducible if it cannot be represented as a union of two non-trivial closed subsets.
 A maximal irreducible closed subset of a variety is called an {\it irreducible component}.
It is well known that any affine variety can be represented as a finite union of its irreducible components in a unique way.
The algebra $\mathcal A$ is rigid in $\mathbb{L}(T)$ if and only if $\overline{O(\mu)}$ is an irreducible component of $\mathbb{L}(T)$.

Given the spaces $U$ and $W$, we write simply $U>W$ instead of $\dim\,U>\dim\,W$.



\subsection{Method of the description of  degenerations of algebras}

In the present work we use the methods applied to Lie algebras in \cite{BC99,GRH,GRH2,S90}.
First of all, if $\mathcal A\to \mathcal B$ and $\mathcal A\not\cong \mathcal B$, then $\mathfrak{Der}(\mathcal A)<\mathfrak{Der}(\mathcal B)$, where $\mathfrak{Der}(\mathcal A)$ is the Lie algebra of derivations of $\mathcal A$. We will compute the dimensions of algebras of derivations and will check the assertion $\mathcal A\to \mathcal B$ only for such $\mathcal A$ and $\mathcal B$ that $\mathfrak{Der}(\mathcal A)<\mathfrak{Der}(\mathcal B)$.


To prove degenerations, we will construct families of matrices parametrized by $t$. Namely, let $\mathcal A$ and $\mathcal B$ be two algebras represented by the structures $\mu$ and $\lambda$ from $\mathbb{L}(T)$ respectively. Let $e_1,\dots, e_n$ be a basis of $\mathbb  V$ and $c_{ij}^k$ ($1\le i,j,k\le n$) be the structure constants of $\lambda$ in this basis. If there exist $a_i^j(t)\in\mathbb{C}$ ($1\le i,j\le n$, $t\in\mathbb{C}^*$) such that $E_i^t=\sum\limits_{j=1}^na_i^j(t)e_j$ ($1\le i\le n$) form a basis of $\mathbb V$ for any $t\in\mathbb{C}^*$, and the structure constants of $\mu$ in the basis $E_1^t,\dots, E_n^t$ are such polynomials $c_{ij}^k(t)\in\mathbb{C}[t]$ that $c_{ij}^k(0)=c_{ij}^k$, then $\mathcal A\to \mathcal B$. In this case  $E_1^t,\dots, E_n^t$ is called a {\it parametrized basis} for $\mathcal A\to \mathcal B$.

If the number of orbits under the action of $GL(\mathbb V)$ on  $\mathbb{L}(T)$ is finite, then the constructions of some degenerations and some non-degenerations give the description of all rigid algebras and irreducible components.
Since the variety of $4$-dimensional nilpotent Novikov algebras  contains infinitely many non-isomorphic algebras, we have to do some additional work.
Let $\mathcal A(*):=\{\mathcal A(\alpha)\}_{\alpha\in I}$ be a set of algebras, and let $\mathcal B$ be another algebra. Suppose that, for $\alpha\in I$, $\mathcal A(\alpha)$ is represented by the structure $\mu(\alpha)\in\mathbb{L}(T)$ and $B\in\mathbb{L}(T)$ is represented by the structure $\lambda$. Then $\mathcal A(*)\to \mathcal B$ means $\lambda\in\overline{\{O(\mu(\alpha))\}_{\alpha\in I}}$, and $\mathcal A(*)\not\to \mathcal B$ means $\lambda\not\in\overline{\{O(\mu(\alpha))\}_{\alpha\in I}}$.

Let $\mathcal A(*)$, $\mathcal B$, $\mu(\alpha)$ ($\alpha\in I$) and $\lambda$ be as above. To prove $\mathcal A(*)\to \mathcal B$ it is enough to construct a family of pairs $(f(t), g(t))$ parametrized by $t\in\mathbb{C}^*$, where $f(t)\in I$ and $g(t)\in GL(\mathbb V)$. Namely, let $e_1,\dots, e_n$ be a basis of $\mathbb V$ and $c_{ij}^k$ ($1\le i,j,k\le n$) be the structure constants of $\lambda$ in this basis. If we construct $a_i^j:\mathbb{C}^*\to \mathbb{C}$ ($1\le i,j\le n$) and $f: \mathbb{C}^* \to I$ such that $E_i^t=\sum\limits_{j=1}^na_i^j(t)e_j$ ($1\le i\le n$) form a basis of $\mathbb V$ for any  $t\in\mathbb{C}^*$, and the structure constants of $\mu_{f(t)}$ in the basis $E_1^t,\dots, E_n^t$ are such polynomials $c_{ij}^k(t)\in\mathbb{C}[t]$ that $c_{ij}^k(0)=c_{ij}^k$, then $\mathcal A(*)\to \mathcal B$. In this case  $E_1^t,\dots, E_n^t$ and $f(t)$ are called a parametrized basis and a {\it parametrized index} for $\mathcal A(*)\to \mathcal B$, respectively.

We now explain how to prove $\mathcal A(*)\not\to\mathcal  B$.
Note that if $\mathfrak{Der} \ \mathcal A(\alpha)  > \mathfrak{Der} \  \mathcal B$ for all $\alpha\in I$ then $\mathcal A(*)\not\to\mathcal B$.
One can use also the following generalization of Lemma from \cite{GRH}, whose proof is the same as the proof of Lemma.

\begin{lemma}\label{gmain}
Let $\mathfrak{B}$ be a Borel subgroup of $GL(\mathbb V)$ and $\mathcal{R}\subset \mathbb{L}(T)$ be a $\mathfrak{B}$-stable closed subset.
If $\mathcal A(*) \to \mathcal B$ and for any $\alpha\in I$ the algebra $\mathcal A(\alpha)$ can be represented by a structure $\mu(\alpha)\in\mathcal{R}$, then there is $\lambda\in \mathcal{R}$ representing $\mathcal B$.
\end{lemma}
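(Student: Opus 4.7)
The plan is to reduce the claim to showing that the saturation $GL(\mathbb V) \cdot \mathcal{R}$ is already Zariski-closed in $\mathbb{L}(T)$. If this holds then, since $\mathcal A(*) \to \mathcal B$ means a representative $\lambda$ of $\mathcal B$ lies in $\overline{\bigcup_{\alpha \in I} O(\mu(\alpha))}$ and each $\mu(\alpha) \in \mathcal R$ gives $O(\mu(\alpha)) \subseteq GL(\mathbb V) \cdot \mathcal R$, we obtain $\lambda \in \overline{GL(\mathbb V) \cdot \mathcal R} = GL(\mathbb V) \cdot \mathcal R$. Hence $\lambda = g * \mu$ for some $g \in GL(\mathbb V)$ and $\mu \in \mathcal R$, and then $\mu = g^{-1} * \lambda$ is the desired representative of $\mathcal B$ lying in $\mathcal R$.

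To prove the saturation is closed, I would consider in the product $GL(\mathbb V) \times \mathbb{L}(T)$ the set $Z = \{(g, m) : g^{-1} * m \in \mathcal R\}$. Since $\mathcal R$ is closed and the action is a morphism of varieties, $Z$ is closed. The $\mathfrak B$-stability of $\mathcal R$ implies $(gb)^{-1} * m = b^{-1} * (g^{-1} * m) \in \mathcal R$ if and only if $g^{-1} * m \in \mathcal R$ for every $b \in \mathfrak B$, so $Z$ is invariant under the free right action of $\mathfrak B$ on the first factor. Since $GL(\mathbb V) \to GL(\mathbb V)/\mathfrak B$ is a principal $\mathfrak B$-bundle, in particular a surjective open morphism, $Z$ descends to a closed subset $\bar Z \subset (GL(\mathbb V)/\mathfrak B) \times \mathbb{L}(T)$.

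The key input is that the flag variety $GL(\mathbb V)/\mathfrak B$ is projective and hence complete. Consequently the second projection $p_2 : (GL(\mathbb V)/\mathfrak B) \times \mathbb{L}(T) \to \mathbb{L}(T)$ is a closed morphism, so $p_2(\bar Z)$ is closed in $\mathbb{L}(T)$. By construction this image equals $\{m \in \mathbb{L}(T) : \exists\, g \in GL(\mathbb V) \text{ with } g^{-1} * m \in \mathcal R\} = GL(\mathbb V) \cdot \mathcal R$, which gives the required closedness and concludes the argument.

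The main obstacle is conceptual rather than computational: one must recognize that the role of the Borel subgroup is to supply, via the projective flag variety $GL(\mathbb V)/\mathfrak B$, exactly the completeness needed to convert the $\mathfrak B$-stability of $\mathcal R$ into closedness of the full orbit $GL(\mathbb V) \cdot \mathcal R$. Once this is noticed, the $\mathfrak B$-invariance of $Z$ and the descent to the quotient are routine verifications, and the fact that no special hypothesis on the parametric family $\mathcal A(*)$ enters the argument explains why the lemma applies uniformly in $\alpha$.
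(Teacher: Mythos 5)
Your proposal is correct and coincides with the argument the paper itself relies on: the paper gives no independent proof but refers to Grunewald--O'Halloran \cite{GRH}, whose proof is exactly your saturation argument, namely that $GL(\mathbb V)\cdot\mathcal R$ is closed because it is the image of the $\mathfrak B$-invariant closed set $Z$ under the closed projection supplied by the completeness of the flag variety $GL(\mathbb V)/\mathfrak B$. All steps (closedness of $Z$, $\mathfrak B$-invariance, descent along the principal bundle, and the reduction of the lemma to closedness of the saturation) are sound, so there is nothing to add.
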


\subsection{The geometric classification of $4$-dimensional nilpotent Novikov algebras}
The main result of the present section is the following theorem.

\begin{theorem}\label{geobl}
The variety of $4$-dimensional nilpotent Novikov algebras  has two  irreducible components
defined by  two infinite families of algebras ${\mathcal N}^4_{20}(\alpha)$ and ${\mathcal N}^4_{22}(\lambda).$
\end{theorem}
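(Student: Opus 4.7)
The plan is to apply the standard framework developed in Section 2.2 in three coordinated steps. First, I would use the algebraic classification (Table A) to list all isomorphism classes of $4$-dimensional nilpotent Novikov algebras, and for each one compute the dimension of the derivation algebra $\mathfrak{Der}$. This yields the orbit dimension via $\dim O(\mu) = 16 - \dim \mathfrak{Der}(\mathcal{A})$. For a one-parameter family $\mathcal{A}(\alpha)$, the closure $\overline{\bigcup_\alpha O(\mu(\alpha))}$ has dimension at most $\dim O(\mu(\alpha)) + 1$, and is generically equal to it. I then identify $\mathcal{N}^4_{20}(\alpha)$ and $\mathcal{N}^4_{22}(\lambda)$ as the candidate irreducible components by verifying that their closures are the two orbit-closures of maximal dimension among the entire classification.

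Next, I would establish the ``upper bound'' by constructing explicit degenerations from these two families onto every other algebra that appears in the classification. Concretely, for each target algebra $\mathcal{B}$, I would exhibit a parametrized basis $E_i^t=\sum_{j}a_i^j(t)e_j$ together with a parametrized index (a function $\alpha(t)$ or $\lambda(t)$) such that the structure constants of $\mathcal{N}^4_{20}(\alpha(t))$ or $\mathcal{N}^4_{22}(\lambda(t))$, rewritten in the basis $E_1^t,\dots,E_4^t$, are polynomials in $t$ whose value at $t=0$ reproduces $\mathcal{B}$. This is largely bookkeeping: it suffices to check that every rigid algebra and every other parametric subfamily in the table lies in one of the two closures, which is done algebra by algebra. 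The $\mathfrak{Der}$-comparison criterion from Section 2.2 ($\mathcal{A}\to\mathcal{B}\Rightarrow\mathfrak{Der}(\mathcal{A})<\mathfrak{Der}(\mathcal{B})$) is used to restrict attention to the pairs where a degeneration is not a priori ruled out.

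Finally, I would prove the ``lower bound'': that the two families yield two distinct irreducible components and neither closure contains the other. For dimension reasons the generic orbit of one cannot sit inside the closure of the other, but a clean argument invokes Lemma \ref{gmain}: choose a Borel subgroup $\mathfrak{B}\subset GL(\mathbb{V})$ and a $\mathfrak{B}$-stable Zariski-closed subset $\mathcal{R}\subset\mathbb{L}(T)$, cut out by the vanishing of a carefully chosen list of structure constants, which contains a $\mathfrak{B}$-conjugate of every $\mathcal{N}^4_{20}(\alpha)$ but no $\mathcal{N}^4_{22}(\lambda)$, and symmetrically for the reverse direction. This precludes $\mathcal{N}^4_{20}(*)\to\mathcal{N}^4_{22}(\lambda)$ and $\mathcal{N}^4_{22}(*)\to\mathcal{N}^4_{20}(\alpha)$. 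Combined with Step 2, this shows that the two closures are precisely the irreducible components.

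The principal obstacle will be the third step: the choice of the Borel-stable closed subsets $\mathcal{R}$ needed to separate the two families, because the non-degenerations between $\mathcal{N}^4_{20}(\alpha)$ and $\mathcal{N}^4_{22}(\lambda)$ cannot be read off from derivation dimensions (the families have comparable $\dim\mathfrak{Der}$) and must be extracted from the geometry of the multiplication tables themselves. A secondary, more mechanical obstacle is the bulk of degenerations required in Step 2; organising them into a table that covers every algebra in the classification — including the many one-parameter subfamilies with special values of the parameter — and then verifying that no further irreducible component is hidden, is the most laborious part of the proof.
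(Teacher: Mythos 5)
Your plan coincides in substance with the paper's proof: the authors likewise single out ${\mathcal N}^4_{20}(\alpha)$ and ${\mathcal N}^4_{22}(\lambda)$ by computing that their derivation algebras have the minimal possible dimension (namely $3$), and then exhibit a table of explicit parametrized bases and parametrized indices realizing every other algebra in the closure of one of the two families, quoting \cite{kppv} for the part of the variety satisfying $(xy)z=x(yz)=0$. Your third step (separating the two closures via Lemma~\ref{gmain} and $\mathfrak{B}$-stable closed sets, since derivation dimensions cannot distinguish them) is if anything spelled out more carefully than in the paper's printed argument, which disposes of this point with the minimality of $\dim\mathfrak{Der}$ alone.
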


\begin{Proof}
Recall, that the full description of the degeneration system of $4$-dimensional trivial Novikov algebras was given in \cite{kppv}.
Using the cited result, we have that the variety of $4$-dimensional trivial Novikov algebras has two irreducible components given by the following
families of algebras:

$$\begin{array}{lllllll}
\mathfrak{N}_2(\alpha)  & e_1e_1 = e_3, &e_1e_2 = e_4,  &e_2e_1 = -\alpha e_3, &e_2e_2 = -e_4 \\

\mathfrak{N}_3(\alpha)  & e_1e_1 = e_4, &e_1e_2 = \alpha e_4,  &e_2e_1 = -\alpha e_4, &e_2e_2 = e_4,  &e_3e_3 = e_4.
\end{array}$$

Now we can prove that the variety of $4$-dimensional nilpotent Novikov algebras has two irreducible components.
By some easy calculations we have that
\[ \mathfrak{Der} \ {\mathcal N}^4_{20}(\alpha)=3,  \
   \mathfrak{Der} \ {\mathcal N}^4_{22}(\lambda)_{\lambda \neq 0,1}=3.\]

Since, the dimensions of derivations of these algebras are the smallest possible in this variety, then, we have that the families of algebras ${\mathcal N}^4_{20}(\alpha)$ and
${\mathcal N}^4_{22}(\lambda)$
give two irreducible components. 
The list of all necessary degenerations is given in Table B (see Appendix A).

\end{Proof}

\section*{Appendix A.}

 {\tiny

\[\begin{array}{lllllllllllll}

\multicolumn{8}{c}{ \mbox{ {\bf Table A.}
{\it The list of $4$-dimensional nilpotent "pure" Novikov algebras.}}} \\ \\

{\mathcal N}^4_{01} &:& e_1 e_1 = e_2  & e_2 e_1=e_3  \\
{\mathcal N}^4_{02}(\lambda) &:& e_1 e_1 = e_2 & e_1 e_2=e_3 & e_2 e_1=\lambda e_3   \\

{\mathcal N}^4_{03}  &:& e_1 e_1 = e_2 & e_1 e_2=e_4 & e_2 e_1=e_3   \\
{\mathcal N}^4_{04}(\alpha) &:&
e_1 e_1 = e_2, &   e_1e_2=e_4, & e_2e_1=\alpha e_4, & e_3e_3=e_4,  \\

{\mathcal N}^4_{05} &:&
e_1 e_1 = e_2, &   e_1e_2=e_4, & e_1e_3= e_4,&  e_2e_1= e_4, & e_3e_3=e_4 \\

{\mathcal N}^4_{06}(\alpha)_{\alpha\neq0} &:&
e_1 e_1 = e_2, &   e_1e_2=e_4, & e_1e_3=e_4, & e_2e_1=\alpha e_4  \\

{\mathcal N}^4_{07} &:&
e_1 e_1 = e_2, &    e_2e_1= e_4, & e_3e_3=e_4 \\

{\mathcal N}^4_{08} &:&
e_1 e_1 = e_2, & e_1e_3=e_4, &    e_2e_1= e_4  \\

{\mathcal N}^4_{09} &:&
e_1 e_1 = e_2, & e_1e_2=e_4, &    e_3e_1= e_4  \\

{\mathcal N}^{4}_{10} &:&     e_1 e_2=e_3  & e_1 e_3=e_4\\

{\mathcal N}^{4}_{11} &:&   e_1e_2=e_3 & e_1 e_3=e_4  & e_2 e_1=e_4, \\

{\mathcal N}^{4}_{12} &:&  e_1e_2=e_3 & e_2 e_3=e_4  & e_3 e_2=-e_4, \\

{\mathcal N}^{4}_{13} &:&  e_1e_2=e_3 & e_1 e_1=e_4 & e_2 e_3=e_4  & e_3 e_2=-e_4, \\

{\mathcal N}^{4}_{14} &:&  e_1e_2=e_3 & e_1 e_3=e_4 & e_2 e_3=e_4  & e_3 e_2=-e_4, \\

{\mathcal N}^{4}_{15} &:&  e_1e_2=e_3 & e_1 e_1=e_4 &  e_1 e_3=e_4 & e_2 e_3=e_4  & e_3 e_2=-e_4, \\

{\mathcal N}^{4}_{16} &:&  e_1e_2=e_3 &  e_1 e_3=e_4 & e_2 e_2=e_4,  \\

{\mathcal N}^{4}_{17} &:&  e_1e_2=e_3 &  e_1 e_3=e_4 & e_2 e_1=e_4 & e_2 e_2=e_4,  \\

{\mathcal N}^{4}_{18} &:&  e_1e_2=e_3 &  e_2 e_2=e_4 & e_2 e_3=e_4  & e_3 e_2=-e_4,  \\

{\mathcal N}^{4}_{19} &:&  e_1e_2=e_3 & e_1 e_1=e_4 &  e_2 e_2=e_4 & e_2 e_3=e_4  & e_3 e_2=-e_4,  \\

{\mathcal N}^{4}_{20}(\alpha) &:&  e_1e_2=e_3 & e_1 e_1=\alpha e_4 &  e_1 e_3=e_4 & e_2 e_2=e_4 & e_2
e_3=e_4  & e_3 e_2=-e_4,  \\

{\mathcal N}^4_{21} &:&
e_1 e_1 = e_2, &  e_2 e_1=e_3, &  e_1 e_3=e_4, & e_3 e_1=-e_4 \\

{\mathcal N}^4_{22}(\lambda) &:&
e_1 e_1 = e_2 &  e_1 e_2=e_3& e_1 e_3=(2-\lambda)e_4 &  e_2 e_1= \lambda e_3&  e_2 e_2=\lambda e_4&  e_3 e_1=\lambda e_4\\
{\mathcal N}^4_{23} &:&
e_1 e_1 = e_2 &  e_1 e_2=e_3 &  e_1 e_3=2e_4 & e_2 e_1= e_4  \\
{\mathcal N}^4_{24} &:&
e_1 e_1 = e_2 &  e_1 e_2=e_3 & e_1 e_3=e_4 &e_2 e_1= e_3 + e_4 & e_2 e_2= e_4&  e_3 e_1= e_4,

\end{array}\]

$$\begin{tabular}{|rcl|l|}

\multicolumn{4}{c}{ \mbox{ {\bf Table B.}
{\it Degenerations of Novikov algebras of dimension $4$.}}} \\
\multicolumn{4}{c}{}\\

  \hline
  ${\mathcal N}^{4}_{14} $&$\to$&$ {\mathcal N}^{4}_{01}$  & $E_1^t=t^{-1}(e_1-e_2),  E_2^t=-t^{-2}e_3,  E_3^t=-t^{-3}e_4,  E_4^t=-e_2$ \\
  \hline
  ${\mathcal N}^{4}_{22}(\lambda) $&$\to$&$ {\mathcal N}^{4}_{02}(\lambda)$    & $E_1^t=e_1,  E_2^t=e_2,  E_3^t=e_3, E_4^t=t^{-1}e_4$ \\
  \hline
  ${\mathcal N}^{4}_{23} $&$\to$&$ {\mathcal N}^{4}_{03}$   & $E_1^t=te_1,  E_2^t=t^2e_2,  E_3^t=t^3e_4,  E_4^t=t^3e_3$\\
  \hline
  ${\mathcal N}^{4}_{20}(-\frac{\beta}{(\beta+1)^2})  $&$\to$&$  {\mathcal N}^{4}_{04}(\beta)$   & $E_1^t=t^2(-\beta e_1+\frac{\beta^2}{\beta+1}e_2-\frac{\beta^2}{\beta+1}e_3 + \frac{\beta^3}{\beta+1}e_4)$ \\

&&&  $E_2^t=t^4(-\frac{\beta^3}{\beta+1}e_3+\frac{2\beta^4}{(\beta+1)^2}e_4),  E_3^t=t^3\frac{\beta^2}{\beta+1}e_2,  E_4^t=t^6\frac{\beta^4}{(\beta+1)^2}e_4$\\



\hline
    ${\mathcal N}^{4}_{20}(-\frac{1}{4}-\frac{1}{2}\sqrt[3]{\frac{t}{4}})  $&$\to$&$ {\mathcal N}^{4}_{05}$  &
$E^t_1=-\sqrt[3]{4t^2}e_1+\sqrt[3]{\frac{t^2}{2}}e_2-\sqrt[3]{\frac{t^2}{2}}e_3,
E^t_2=-\sqrt[3]{2t^4}e_3+(\sqrt[3]{2t^4}-\sqrt[3]{\frac{t^5}{2}})e_4,
E^t_3=te_2, E^t_4=t^2e_4$\\

\hline
    ${\mathcal N}^{4}_{04}(\alpha\neq 1) $&$\to$&$ {\mathcal N}^{4}_{06}(\alpha)$  & $E_1^t=t(e_1-\frac{\alpha}{(\alpha-1)^2}e_2+\frac{\alpha}{\alpha-1}e_3+\frac{\alpha^2}{(\alpha-1)^4}e_4),$
     \\
     &&&  $E_2^t=t^2(e_2-\frac{\alpha}{(\alpha-1)^2}e_4),  E_3^t=t^2(e_3-\frac{1}{\alpha-1}e_2+\frac{\alpha}{(\alpha-1)^3}e_4),  E_4^t=t^3e_4$ \\
   \hline
    ${\mathcal N}^{4}_{20}(0) $&$\to$&$ {\mathcal N}^{4}_{07}$  & $E_1^t=t^2(-e_1+e_2-e_3+e_4),  E_2^t=t^4(-e_3+2e_4),  E_3^t=-t^3e_2,  E_4^t=t^6e_4$ \\
 \hline
   ${\mathcal N}^{4}_{23} $&$\to$&$ {\mathcal N}^{4}_{08}$   & $E_1^t=te_1,  E_2^t=t^2e_2,  E_3^t=\frac{1}{2}t^2e_3,  E_4^t=t^3e_4$\\
  \hline
  ${\mathcal N}^{4}_{22}(2)  $&$\to$&$ {\mathcal N}^{4}_{09}$    & $E_1^t=t(e_1-e_2),  E_2^t=t^2(e_2-3e_3+2e_4),  E_3^t=t^2(e_3-4e_4), E_4^t=2t^3e_4$ \\
  \hline
    ${\mathcal N}^{4}_{11}  $&$\to$&$ {\mathcal N}^{4}_{10}$    & $E_1^t=t^{-1}e_1,  E_2^t=t^{-2}e_2,  E_3^t=t^{-3}e_3, E_4^t=t^{-4}e_4$ \\
  \hline
   ${\mathcal N}^{4}_{20}(t-1)  $&$\to$&$ {\mathcal N}^{4}_{11}$    & $E_1^t=t(e_1+e_3-e_4),  E_2^t=te_2,  E_3^t=t(e_3-e_4), E_4^t=te_4$ \\
  \hline${\mathcal N}^{4}_{13}  $&$\to$&$ {\mathcal N}^{4}_{12}$    & $E_1^t=te_1,  E_2^t=e_2,  E_3^t=te_3, E_4^t=te_4$ \\
  \hline ${\mathcal N}^{4}_{15}  $&$\to$&$ {\mathcal N}^{4}_{13}$    &  $E_1^t=t^2e_1,  E_2^t=te_2,  E_3^t=t^3e_3, E_4^t=t^4e_4$ \\
  \hline${\mathcal N}^{4}_{15}  $&$\to$&$ {\mathcal N}^{4}_{14}$    & $E_1^t=t^{-1}e_1,  E_2^t=t^{-1}e_2,  E_3^t=t^{-2}e_3, E_4^t=t^{-3}e_4$ \\
  \hline ${\mathcal N}^{4}_{20}(\frac{1}{t})  $&$\to$&$ {\mathcal N}^{4}_{15}$    & $E_1^t=t^{-1}e_1,  E_2^t=t^{-1}e_2,  E_3^t=t^{-2}e_3, E_4^t=t^{-3}e_4$ \\
  \hline ${\mathcal N}^{4}_{17} $&$\to$&$ {\mathcal N}^{4}_{16}$    & $E_1^t=t^{-1}e_1,  E_2^t=t^{-2}e_2,  E_3^t=t^{-3}e_3, E_4^t=t^{-4}e_4$ \\ \hline
  ${\mathcal N}^{4}_{20}(t^3-t) $&$\to$&$ {\mathcal N}^{4}_{17}$    & $E_1^t=t e_1+t^2e_3-t^3e_4,  E_2^t=t^2e_2,  E_3^t=t^3e_3-t^4e_4, E_4^t=t^{4}e_4$  \\ \hline
  ${\mathcal N}^{4}_{19} $&$\to$&$ {\mathcal N}^{4}_{18}$    & $E_1^t=e_1,  E_2^t=t^{-1}e_2,  E_3^t=t^{-1}e_3, E_4^t=t^{-2}e_4$ \\
  \hline ${\mathcal N}^{4}_{20}(\frac{1}{t^2}) $&$\to$&$ {\mathcal N}^{4}_{19}$    & $E_1^t=e_1,  E_2^t=t^{-1}e_2,  E_3^t=t^{-1}e_3, E_4^t=t^{-2}e_4$ \\

  \hline
    ${\mathcal N}^{4}_{22}(\frac 1 t) $&$\to$&$ {\mathcal N}^{4}_{21}$    & $E_1^t=e_1,  E_2^t=e_2,  E_3^t=\frac 1 t e_3, E_4^t= - \frac 1 {t^2} e_4$ \\
  \hline
  ${\mathcal N}^{4}_{22}(t) $&$\to$&$ {\mathcal N}^{4}_{23}$    & $E_1^t=e_1 + \frac{1} {t^2(t-1)}e_2,  E_2^t=e_2 + \frac{t+1} {t^2(t-1)}e_3+ \frac{1} {t^3(t-1)^2}e_4,  E_3^t=e_3+ \frac{2+2t-t^2} {t^2(t-1)}e_4, E_4^t=e_4$ \\
  \hline
${\mathcal N}^{4}_{22}(t+1) $&$\to$&$ {\mathcal N}^{4}_{24}$    & $E_1^t=e_1 + \frac{1} {t(t+1)^2}e_2,  E_2^t=e_2 + \frac{t+2} {t(t+1)^2}e_3+ \frac{1} {t^3(t+1)^2}e_4,  E_3^t=e_3+ \frac{3-t^2} {t(t+1)^2}e_4, E_4^t=e_4$ \\

  \hline
  ${\mathcal N}^{4}_{17} $&$\to$&$ \mathfrak{N}_2(\alpha)_{\alpha\neq 0, 1} $    & $E_1^t= - \frac{\sqrt{1-\alpha}}{\alpha} t e_1 + \frac{\sqrt{1-\alpha}}{\alpha} t e_2,  E_2^t=  \sqrt{1-\alpha} t e_1 + \frac{t}{\sqrt{1-\alpha}} e_3, E_3^t=  \frac{\alpha-1}{\alpha^2} t^2 e_3, E_4^t=-t^2 e_4$ \\
  \hline ${\mathcal N}^{4}_{07}(\alpha\neq -1) $&$\to$&$ \mathfrak{N}_3(i\frac{\alpha-1}{\alpha+1}) $    & $E_1^t=t\sqrt[3]{\frac{2}{\alpha+1}}  e_1,  E_2^t= i t \left(\sqrt[3]{\frac{2}{\alpha+1}}e_1 -  \sqrt[3]{\frac{2}{\alpha+1}}^2e_2 +  e_4\right),
 E_3^t=t e_3, E_4^t=t^2 e_4$ \\
  \hline

\end{tabular}$$
}


\begin{thebibliography}{99}





\bibitem{ack}
Abdelwahab H.,  Calder\'on A.J., Kaygorodov I.,
 The algebraic and geometric classification of nilpotent binary Lie algebras, 
 arXiv:1902.01706

\bibitem{omirov}
Adashev J.,  Camacho L.,  Omirov B.,
    Central extensions of null-filiform and naturally graded filiform non-Lie Leibniz algebras,
    Journal of Algebra, 479 (2017), 461--486.

\bibitem{maria}
Alvarez M.A., Hern\'{a}ndez I., Kaygorodov I.,
    Degenerations of Jordan superalgebras,
    Bulletin of the Malaysian Mathematical Sciences Society,   2018,  DOI: 10.1007/s40840-018-0664-3

\bibitem{bn85}
Balinsky A., Novikov S.,
 Poisson brackets of hydrodynamic type, Frobenius algebras and Lie algebras,
 Soviet Math. Dokl., 32 (1985), 1, 228--231.

\bibitem{bkk}
 Bauerle G.G.A., de Kerf E.A.,  ten Kroode A.P.E.,
Lie Algebras. Part 2. Finite and Infinite Dimensional
Lie Algebras and Applications in Physics, edited and with a preface by E.M. de Jager, Studies
in Mathematical Physics, vol. 7, North-Holland Publishing Co., Amsterdam, ISBN 0-444-82836-2,
1997, x+554 pp


\bibitem{bc01}
Bai C., Meng D.,
The classification of Novikov algebras in low dimensions,
J. Phys. A, 34 (2001),  8, 1581--1594.


\bibitem{bb14}
Beneš T., Burde D.,
Classification of orbit closures in the variety of three-dimensional Novikov algebras,
Journal of Algebra Appl., 13 (2014),  2, 1350081, 33 pp.

\bibitem{bg13}
Burde D., de Graaf W.,
Classification of Novikov algebras,
Appl. Algebra Engrg. Comm. Comput., 24 (2013),  1, 1--15.




\bibitem{BC99} Burde D., Steinhoff C.,
Classification of orbit closures of $4$--dimensional complex Lie algebras,
Journal of Algebra, 214 (1999), 2, 729--739.

\bibitem{cfk182}
Calderón Martín A., Fern\'andez Ouaridi A., Kaygorodov I.,
 The classification of $n$-dimensional anticommutative algebras with $(n-3)$-dimensional annihilator, 
Communications in Algebra,  47 (2019), 1, 173--181.



\bibitem{cfk19}
Calderón Martín A.,  Fern\'andez Ouaridi A., Kaygorodov I.,
 The classification of $2$-dimensional rigid algebras,
 Linear and Multilinear Algebra, 2018,  DOI:10.1080/03081087.2018.1519009

\bibitem{cfk18}
Calderón Martín A., Fern\'andez Ouaridi A., Kaygorodov I.,
 The classification of bilinear maps with   radical of codimension 2, 
 arXiv:1806.07009

\bibitem{ck13}	
Ca\~nete E.,  Khudoyberdiyev A., 
    The classification of $4$-dimensional Leibniz algebras, 
    Linear Algebra Appl., 439 (2013), 1, 273--288.
    

\bibitem{casas}
Casas J.,  Khudoyberdiyev A., Ladra M., Omirov B.,
 On the degenerations of solvable Leibniz algebras,
 Linear Algebra Appl., 439 (2013), 2, 472--487.

\bibitem{degr3}
Cicalò S., De Graaf W.,   Schneider C.,
 Six-dimensional nilpotent Lie algebras,
 Linear Algebra Appl., 436 (2012), 1, 163--189.


\bibitem{usefi1}
Darijani I., Usefi H.,
 The classification of 5-dimensional $p$-nilpotent restricted Lie algebras over perfect fields, I.,
 Journal of Algebra, 464 (2016), 97--140.


\bibitem{degr2}
De Graaf W., 
 Classification of 6-dimensional nilpotent Lie algebras over fields of characteristic not $2$, 
 Journal of Algebra, 309  (2007), 2, 640--653.

\bibitem{degr1}
De Graaf W., 
 Classification of nilpotent associative algebras of small dimension,
 Internat. Journal of Algebra Comput., 28 (2018),  1, 133--161.


\bibitem{demir}
Demir I., Misra K.,  Stitzinger E.,
 On classification of four-dimensional nilpotent Leibniz algebras,
 Communications in Algebra, 45 (2017), 3, 1012--1018.


\bibitem{dz02}
Dzhumadildaev A.,
 Novikov--Jordan algebras,  
 Communications in Algebra, 30 (2002), 11, 5207--5240.

\bibitem{dz11}
Dzhumadildaev A.,
 Codimension growth and non--Koszulity of Novikov operad,
 Communications in Algebra, 39 (2011),  8, 2943--2952.

\bibitem{di14}
Dzhumadildaev A., Ismailov N.,
 $S_n$- and $GL_n$-module structures on free Novikov algebras,
 Journal of Algebra, 416 (2014), 287--313.


\bibitem{dl02}
Dzhumadildaev A., Löfwall C.,
 Trees, free right-symmetric algebras, free Novikov algebras and identities,
 Homology Homotopy Appl., 4 (2002),  2,  165--190.

\bibitem{dt05}
Dzhumadildaev A., Tulenbaev K.,
 Engel theorem for Novikov algebras, 
 Communications in Algebra, 34 (2006),  3, 883--888.


\bibitem{fi89}
Filippov V.,
 A class of simple nonassociative algebras,
 Math. Notes, 45 (1989),  1-2, 68--71.





\bibitem{gkks}
Gorshkov I., Kaygorodov I., Kytmanov A., Salim M.,
    The variety of nilpotent Tortkara algebras,
 Journal of Siberian Federal University, 12 (2019), 2, 173--184.

\bibitem{gkk}
Gorshkov I., Kaygorodov I., Khrypchenko M.,
    The algebraic classification of Tortkara algebras,
    preprint


\bibitem{GRH}
Grunewald F.,  O'Halloran J.,
    Varieties of nilpotent Lie algebras of dimension less than six,
    Journal of Algebra, 112 (1988), 315--325.

\bibitem{GRH2}
Grunewald F., O'Halloran J.,
    A Characterization of orbit closure and applications,
    Journal of Algebra, 116 (1988), 163--175.

\bibitem{ha16n}
Hegazi A., Abdelwahab H.,
    Nilpotent evolution algebras over arbitrary fields,
    Linear Algebra Appl., 486 (2015), 345--360.


\bibitem{ha16}
Hegazi A., Abdelwahab H.,
    Classification of five-dimensional nilpotent Jordan algebras,
    Linear Algebra Appl., 494 (2016), 165--218.



\bibitem{ha17}
Hegazi A., Abdelwahab H.,
    The classification of $n$-dimensional non-associative Jordan algebras with $(n-3)$-dimensional annihilator,
    Communications in Algebra, 46 (2018), 2, 629--643.

\bibitem{hac16}
Hegazi A., Abdelwahab H., Calderón Martín A.,
    The classification of $n$-dimensional non-Lie Malcev algebras with $(n-4)$-dimensional annihilator, 
    Linear Algebra Appl. 505 (2016), 32--56.

\bibitem{hac18}
Hegazi A., Abdelwahab H.,  Calderón Martín A.,
    Classification of nilpotent Malcev algebras of small dimensions over arbitrary fields of characteristic not $2$,
    Algebr. Represent. Theory, 21 (2018), 1, 19--45.




\bibitem{ikv17}
Ismailov N., Kaygorodov I.,  Volkov Yu.,
    The geometric classification of Leibniz algebras,
    International Journal of Mathematics, 29  (2018), 5, 1850035.



\bibitem{ikv18}
Ismailov N., Kaygorodov I.,  Volkov Yu.,
    Degenerations of Leibniz and anticommutative algebras,
    Canadian Mathematical Bulletin,  2019, DOI: 10.4153/S0008439519000018



\bibitem{kkl18}
Karimjanov I., Kaygorodov I.,	Ladra M.,
    Central extensions of filiform associative algebras, 
    arXiv:1809.00183


\bibitem{kpv19}
Kaygorodov I., Paez  P., Voronin V.,  
    The algebraic and geometric classification of nilpotent bicommutative algebras,
    arXiv:1903.08997 


\bibitem{kppv}
Kaygorodov I.,  Popov Yu., Pozhidaev A., Volkov Yu.,
    Degenerations of Zinbiel and nilpotent Leibniz algebras,
    Linear and Multilinear Algebra,   66 (2018), 4, 704--716.


\bibitem{kpv}
Kaygorodov I., Popov Yu., Volkov Yu.,
    Degenerations of binary-Lie and nilpotent Malcev algebras,
    Communications in Algebra, 46 (2018), 11, 4929--4941.



\bibitem{kv16}
Kaygorodov I.,   Volkov Yu.,
    The variety of $2$-dimensional algebras over an algebraically closed field,
    Canadian  Journal of Mathematics, 2018, DOI: 10.4153/S0008414X18000056

\bibitem{krh14}	Khudoyberdiyev A., Rakhimov I., Said Husain Sh.K., 
    On classification of $5$-dimensional solvable Leibniz algebras, 
    Linear Algebra Appl., 457 (2014), 428--454.

\bibitem{os94}
Osborn J.,
    Infinite-dimensional Novikov algebras of characteristic 0,
    Journal of Algebra, 167 (1994),  1, 146--167


\bibitem{S90}
Seeley C., 
    Degenerations of 6-dimensional nilpotent Lie algebras over $\mathbb{C}$, 
    Communications in Algebra, 18 (1990), 3493--3505.


\bibitem{ss78}
Skjelbred T., Sund T.,
    Sur la classification des algebres de Lie nilpotentes,
    C. R. Acad. Sci. Paris Ser. A-B, 286 (1978), 5,  A241--A242.



\bibitem{xu01}
Xu X.,
    Classification of simple Novikov algebras and their irreducible modules of characteristic 0,
    Journal of Algebra, 246 (2001),  2, 673--707.

\bibitem{xu97}
Xu X.,
    Novikov--Poisson algebras,
    Journal of Algebra, 190 (1997),  2, 253--279.

\bibitem{xu96}
Xu X.,
    On simple Novikov algebras and their irreducible modules,
    Journal of Algebra, 185 (1996),  3, 905--934.



\bibitem{ze87}
Zelmanov E.,
    A class of local translation-invariant Lie algebras,
    Soviet Math. Dokl., 35 (1987),  6, 216--218.

\bibitem{zusmanovich}
Zusmanovich P., 
    Central extensions of current algebras,
    Trans. Amer. Math. Soc. 334 (1992),  1, 143--152.


\end{thebibliography}
\end{document}